\newcommand{\be}{\begin{equation}}
\newcommand{\ee}{\end{equation}}
\newcommand{\beq}{\begin{eqnarray}}
\newcommand{\eeq}{\end{eqnarray}}
\newtheorem{thm}{Theorem}[section]
\newtheorem{conj}{Conjecture}[section]
\newtheorem{lma}{Lemma}[section]
\newtheorem{prop}{Proposition}[section]
\newtheorem{cor}{Corollary}[section]
\newtheorem{defn}{Definition}[section]
\theoremstyle{remark}
\newtheorem{rem}{Remark}[section]
\numberwithin{equation}{section}
\def\be{\begin{equation}}
\def\ee{\end{equation}}
\def\bee{\begin{equation*}}
\def\eee{\end{equation*}}
\def\lf{\left}
\def\ri{\right}
\def\Ric{\text{\rm Ric}}
\def\Rm{\text{\rm Rm}}
\def\cR{ \mathcal{R}}
\def\wh{\widehat}
\def\wt{\widetilde}
\def\la{\langle}
\def\ra{\rangle}
\def\p{\partial}
\def\heat{\lf(\frac{\p}{\p t}-\Delta_t\ri)}
\def\wn{\wt\nabla}
\def\e{\varepsilon}
\def\a{{\alpha}}
\def\b{{\beta}}
\begin{document}

\title[]
{Continuous metrics and a conjecture of Schoen}

 \author{Man-Chun Lee}
\address[Man-Chun Lee]{Department of Mathematics, The Chinese University of Hong Kong, Shatin, Hong Kong, China}
\email{mclee@math.cuhk.edu.hk}

\author{Luen-Fai Tam}
\address[Luen-Fai Tam]{The Institute of Mathematical Sciences and Department of Mathematics, The Chinese University of Hong Kong, Shatin, Hong Kong, China.}
 \email{lftam@math.cuhk.edu.hk}


\renewcommand{\subjclassname}{
  \textup{2010} Mathematics Subject Classification}
\subjclass[2010]{Primary 53C44
}

\date{\today}

\begin{abstract} A classical theorem in conformal geometry states that on a manifold with non-positive Yamabe invariant, a smooth metric achieving the invariant must be Einstein. In this work, we extend it to the singular case and show that in all dimension, if a continuous metric is smooth outside a compact set of high co-dimension and achieves the Yamabe invariant, then the metric is Einstein away from the singularity and can be extended to be smooth on the manifold in a suitable sense. As an application of the method, we prove a Positive Mass Theorem for asymptotically flat manifolds with analogous singularities.
\end{abstract}


\maketitle

\markboth{Man-Chun Lee, Luen-Fai Tam}{Continuous metrics and a conjecture of Schoen}
\section{introduction}

In this work, we want to study the following conjecture of Schoen:

\begin{conj}[Conjecture 1.5 in \cite{LiMantoulidis2019}] \label{conj-1}
Let $M^n$ be a compact manifold with $\sigma(M)\leq 0$. Suppose $g$ is an $L^\infty$ metric on $M$ such that $g$ is smooth away from a closed, embedded submanifold $\Sigma$ with co-dimension $\geq 3$ and satisfies $\mathcal{R}(g)\geq 0$ outside $\Sigma$, then $\Ric(g)=0$ and $g$ can be extended smoothly on $M$.
\end{conj}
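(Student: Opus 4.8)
The plan is to establish the conjecture in three stages. The first stage is a reduction: I will show that the hypotheses $\mathcal{R}(g)\ge 0$ on $M\setminus\Sigma$ and $\sigma(M)\le 0$ already force $g$ to \emph{realize} the Yamabe invariant and to have $\mathcal{R}(g)\equiv 0$ off $\Sigma$, so that the problem reduces to the situation in which $g$ achieves $\sigma(M)\le 0$. Since $\Sigma$ is a closed embedded submanifold of codimension $\ge 3$ it has vanishing $2$-capacity, so $C^\infty_c(M\setminus\Sigma)$ is dense in $W^{1,2}(M)$ and the conformal-Laplacian quadratic form $Q_g(u)=\tfrac{4(n-1)}{n-2}\int_M|\nabla u|_g^2\,dV_g+\int_M\mathcal{R}(g)\,u^2\,dV_g$ makes sense and is nonnegative on $W^{1,2}(M)$ (here one uses that $g$ is uniformly comparable to a smooth background metric, so the Sobolev spaces coincide). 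Thus the Yamabe constant $Y(M,[g])\ge 0$; granting $Y(M,[g])\le\sigma(M)$ — the delicate point, see the end — and $\sigma(M)\le 0$, we get $\sigma(M)=0=Y(M,[g])$. A minimizing sequence $u_k$ with $\|u_k\|_{L^{2n/(n-2)}}=1$ then satisfies $\int_M|\nabla u_k|_g^2\to 0$, hence $u_k\to c\ne 0$ in $W^{1,2}$ and a.e.; Fatou together with $\mathcal{R}(g)\ge 0$ forces $\int_M\mathcal{R}(g)\,dV_g=0$, so $\mathcal{R}(g)\equiv 0$ a.e.\ on $M\setminus\Sigma$ and the constants are genuine minimizers. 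Hence $g$ achieves $\sigma(M)$.

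The second stage runs the argument that is classical in the smooth case, now on $M\setminus\Sigma$. Given a smooth symmetric $2$-tensor $h$ supported in $M\setminus\Sigma$, set $g_s=g+sh$; then $g_s$ is still continuous, smooth off $\Sigma$, equal to $g$ near $\Sigma$, and $Y(M,[g_s])\le\sigma(M)=0=Y(M,[g])$, so $s\mapsto Y(M,[g_s])$ has a maximum at $s=0$. Because $\sigma(M)\le 0$, the Yamabe minimizer in each class is unique up to scaling and depends $C^1$ on $s$ modulo that scaling (implicit function theorem for the Yamabe equation, whose linearization at a constant-scalar-curvature minimizer is invertible on mean-zero functions), so $Y(M,[g_s])$ is $C^1$ near $s=0$. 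Normalizing $\mathrm{Vol}(g)=1$ and applying the envelope theorem with $u_0\equiv\text{const}$,
\[
0=\left.\frac{d}{ds}\right|_{0}Y(M,[g_s])=\left.\frac{d}{ds}\right|_{0}\Big[\big(\int_M\mathcal{R}(g_s)\,dV_{g_s}\big)\,\mathrm{Vol}(g_s)^{-(n-2)/n}\Big]=-\int_M\langle h,\Ric(g)\rangle\,dV_g,
\]
where $\mathcal{R}(g)=0$ kills the volume term and the $\operatorname{div}\operatorname{div}h-\Delta\tr h$ contributions integrate away because $h$ sits where $g$ is smooth. Since $h$ is arbitrary, $\Ric(g)=0$ on $M\setminus\Sigma$.

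The third stage is the removable-singularity statement. Now $g$ is continuous on $M$ and smooth, Ricci-flat on $M\setminus\Sigma$; in harmonic coordinates the Einstein equation becomes an elliptic system and $g$ is real-analytic off $\Sigma$. Because $\Sigma$ has codimension $\ge 3$ (again vanishing $2$-capacity), test tensors can be cut off near $\Sigma$ at zero Dirichlet cost, so the distributional Ricci tensor of $g$ carries no singular part along $\Sigma$; combined with an $\varepsilon$-regularity estimate for Einstein metrics this bootstraps $g$ from $C^0\cap C^\infty_{\mathrm{loc}}(M\setminus\Sigma)$ to $W^{2,p}$, hence $C^{1,\alpha}$, across $\Sigma$, after which analytic regularity of the Einstein system propagates smoothness across $\Sigma$. (Alternatively, smooth $g$ by the Ricci--DeTurck flow using the theory of the flow from continuous initial metrics; the flow preserves the Ricci-flat condition from the second stage, so it is stationary and $g=g(t)$ is smooth.)

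The main obstacle is the role of $\Sigma$ at the two places flagged above. In the first stage, establishing $Y(M,[g])\le\sigma(M)$ for a merely \emph{continuous} metric is genuinely subtle, since scalar curvature involves two derivatives and is not stable under uniform approximation; one must exploit continuity of $g$ together with $\mathrm{codim}\,\Sigma\ge 3$, for instance by a smoothing of $g$ along which the bound $\mathcal{R}\ge 0$ is almost preserved. In the third stage, the removable-singularity statement for a low-regularity Einstein metric across a set of codimension $\ge 3$ needs exactly the right $\varepsilon$-regularity input to initiate the bootstrap. Codimension $\ge 3$ is precisely the threshold at which $\Sigma$ becomes invisible to the Dirichlet energy and to the Einstein PDE, and I expect the technical heart of the paper to be a quantitative form of this invisibility.
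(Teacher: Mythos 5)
Your proposal takes a route that is genuinely different from the paper's: you work with the Yamabe variational characterization directly, whereas the paper never touches it. The paper smooths $g$ by the Ricci-DeTurck $h$-flow $g(t)$, proves via a local maximum principle and a monotonicity formula for $\int_M(\mathcal{R}_{g(t)}-\sigma(t))_-\,d\mu_{g(t)}$ that the scalar curvature lower bound survives across the singular set when its codimension exceeds $2$, then applies the classical smooth fact that a unit-volume metric achieving $\sigma(M)\le0$ is Einstein to each $g(t)$ with $t>0$, and finally transports the Einstein structure back to $t=0$ using the diffeomorphism relating the Ricci-DeTurck and Ricci flows. Notably the paper only handles $C^0$ metrics (not general $L^\infty$), requires codimension $>2$ rather than $\ge3$, and obtains a smooth extension across $\Sigma$ only when $\Sigma$ consists of isolated points (via Smith–Yang), so neither argument actually settles the conjecture as stated.

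The gap in your proposal is concentrated exactly where you flag it, and it is not deferrable: the inequality $Y(M,[g])\le\sigma(M)$ is the entire content of the problem. The invariant $\sigma(M)$ is a supremum over \emph{smooth} conformal classes, and there is no generic approximation principle bringing an $L^\infty$ metric into that competition — the Yamabe quotient involves second derivatives of $g$ and is wildly unstable under $C^0$ perturbations. Vanishing $2$-capacity of $\Sigma$ controls the Dirichlet term but says nothing about the $\int\mathcal{R}(g)u^2$ term, whose integrand is not controlled near $\Sigma$ by the $L^\infty$ hypothesis alone; one needs a quantitative smoothing procedure that keeps the scalar curvature nearly bounded below, which is precisely what the paper's Ricci-flow machinery (Propositions on $\delta t^{-1}$ curvature bounds plus the local maximum principle) supplies. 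Without that, stage one does not close and stage two has nothing to bite on. There are subsidiary gaps as well — the envelope argument needs differentiability of $s\mapsto Y(M,[g_s])$ for a \emph{singular} background, which requires a minimizer-regularity theory with $L^\infty$ coefficients that you only gesture at, and your stage-three bootstrap asserts an $\varepsilon$-regularity removable-singularity theorem for Einstein metrics across codimension-$\ge3$ strata that, at this level of generality, is not available in the literature (Smith–Yang, which the paper uses, is strictly for isolated points).
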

Here $\sigma(M)$ is the $\sigma$-invariant or Yamabe invariant of a compact smooth manifold $M$ introduced by Schoen \cite{Schoen1987}, see also the work of Kobayashi \cite{Kobayashi1987}. Moreover, $g$ is said to be $L^\infty$ metric if $g$ is a measurable section of $\mathrm{Sym}_2(T^*M)$ such that $\Lambda^{-1}h\leq g\leq \Lambda h$ almost everywhere on $M$ for some $\Lambda>1$ and smooth metric $h$.
Let us first recall its definition.  For a conformal class  $\mathcal{C}$ of smooth Riemannian metrics $g$, the {\it Yamabe constant of $\mathcal{C}$} is defined as:
$$
Y(\mathcal{C})=\inf_{g\in\mathcal{C}}\displaystyle{\frac{\int_M\mathcal{R}_g\,d\mu_g}{({\mathrm{Vol}}(M,g))^{1-\frac2n}}}.
$$
where $\mathcal{R}_g$ is the scalar curvature and $\mathrm{Vol}(M,g)$ is the volume of $M$ with respect to $g$. The {\it Yamabe invariant}  is defined as
$$
\sigma(M)=\sup_{\mathcal{C}}Y(\mathcal{C}).
$$
The supremum is taken among all conformal classes of smooth metrics. It is finite, see \cite{Aubin1976}. Since it is well-known that if $\sigma(M)\le0$ then a smooth metric with unit volume and with scalar curvature bounded  below by $\sigma(M)$ is Einstein, Conjecture \ref{conj-1} can be extended to the following:

\begin{conj}\label{conj-2}
Let $M^n$ be a compact manifold with $\sigma(M)=\sigma_0\leq 0$. Suppose $g$ is an $L^\infty$ metric on $M$ with unit volume such that $g$ is smooth away from a closed, embedded submanifold $\Sigma$ with co-dimension $\geq 3$ and satisfies $\mathcal{R}(g)\geq \sigma_0$ outside $\Sigma$, then $g$ is Einstein  and $g$ can be extended smoothly on $M$.
\end{conj}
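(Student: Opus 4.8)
We outline an approach to the statement above. The plan is to reduce it to the smooth rigidity quoted above by smoothing $g$ through a geometric flow that propagates the scalar curvature lower bound. The first step is to pass from $L^\infty$ to continuous metrics: one wants to show that an $L^\infty$ metric which is smooth away from $\Sigma$ and has $\mathcal{R}(g)\geq\sigma_0$ on $M\setminus\Sigma$ cannot oscillate across $\Sigma$, so that $g$ is in fact continuous and, since $\mathrm{codim}\,\Sigma\geq 3$, lies in $W^{1,2}_{\mathrm{loc}}$ near $\Sigma$ --- the regularity in which $\Sigma$ is also negligible for $W^{1,2}$. Granting continuity, one checks that $g$ has $\mathcal{R}(g)\geq\sigma_0$ in the weak (distributional, resp.\ Ricci--DeTurck) sense: the inequality holds classically on $M\setminus\Sigma$, and a singular set of codimension $\geq 3$ is too small to carry a negative singular contribution. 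This is where the dimensional hypothesis is essential: a codimension $2$ conical defect would spoil it.

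Next I would run the regularizing Ricci--DeTurck flow $g(t)$, $t\in(0,T)$, with initial data $g$: it exists, is smooth for $t>0$, and $g(t)\to g$ in $C^0$ as $t\to 0^+$, with the DeTurck diffeomorphisms converging to the identity. Since $g$ has $\mathcal{R}\geq\sigma_0$ weakly, the flow preserves this for $t>0$, and the maximum principle applied to $\partial_t\mathcal{R}\geq\Delta\mathcal{R}+\tfrac2n\mathcal{R}^2$ on the smooth flow then upgrades it to $\mathcal{R}(g(t))\geq\sigma_0\big/\big(1+\tfrac{2|\sigma_0|}{n}t\big)$. On the other hand $\tfrac{d}{dt}\log\mathrm{Vol}(g(t))=-\,\overline{\mathcal{R}}(g(t))\leq-\min_M\mathcal{R}(g(t))$, so integrating gives $\mathrm{Vol}(g(t))\leq\big(1+\tfrac{2|\sigma_0|}{n}t\big)^{n/2}$. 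Hence the unit-volume rescaling $\hat g(t):=\mathrm{Vol}(g(t))^{-2/n}g(t)$ is smooth, has unit volume, and
\[
\mathcal{R}(\hat g(t))=\mathrm{Vol}(g(t))^{2/n}\,\mathcal{R}(g(t))\geq\sigma_0=\sigma(M)\qquad\text{for every }t>0,
\]
where one combines the two displayed bounds with $\sigma_0\leq 0$ (checking the inequality separately where $\mathcal{R}(g(t))\geq 0$ and where $\mathcal{R}(g(t))<0$). These are exactly the hypotheses of the smooth rigidity, so $\hat g(t)$, hence $g(t)$, is Einstein for every $t\in(0,T)$ (Ricci-flat when $\sigma_0=0$).

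Finally, a solution of the Ricci(--DeTurck) flow that is Einstein at every time is a rescaling of a fixed smooth Einstein metric $g_1$ (up to the DeTurck diffeomorphisms, which converge to the identity as $t\to 0^+$); together with $g(t)\to g$ in $C^0$ this forces $g=c\,g_1$ for some constant $c>0$. Thus $g$ is smooth and Einstein on all of $M$; in particular it is Einstein on $M\setminus\Sigma$, and $c\,g_1$ is the asserted smooth extension.

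I expect the main obstacle to be the low-regularity inputs of the first step. The harder one is the passage from $L^\infty$ to continuous metrics: ruling out oscillation of $g$ near $\Sigma$ from $\mathcal{R}(g)\geq\sigma_0$ alone is transparent in the conformally flat model, where the conformal factor is a bounded super-solution and so has removable singularities across the codimension $\geq 3$ set, but in general one must work in $g$-harmonic coordinates and control the quasilinear elliptic system satisfied by the $g_{ij}$ with only an a priori $L^\infty$ bound; it is conceivable that the full $L^\infty$ statement requires constructing the Ricci--DeTurck flow directly from $L^\infty$ data instead. The second delicate point is showing that $g$ carries no negative singular scalar curvature on $\Sigma$, i.e.\ that $\mathcal{R}(g)\geq\sigma_0$ holds in the weak sense the flow preserves; this is the content that fails in codimension $2$, and establishing it in codimension $\geq 3$ is where the dimensional hypothesis is used essentially. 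An alternative to the flow, with the same two obstacles, is to approximate $g$ by smooth metrics $g_\e$ with $\mathcal{R}(g_\e)\geq\sigma_0$ on all of $M$ --- a Gromov--Lawson type modification in the normal disk bundle of $\Sigma$, again possible precisely because $\mathrm{codim}\,\Sigma\geq 3$ --- then solve the Yamabe equation for each $g_\e$: the good sign $\lambda_\e\leq\sigma_0\leq 0$ gives uniform $L^\infty$ bounds on the conformal factors $u_\e$ by the maximum principle and $\int_M|\nabla_{g_\e}u_\e|^2\,d\mu_{g_\e}\to 0$ by the energy identity, whence $g$ is a constant-scalar-curvature metric realizing $\sigma(M)$, hence Einstein.
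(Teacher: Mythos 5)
The statement you are proving is Conjecture~\ref{conj-2}, which the paper does \emph{not} prove; the paper establishes only the weaker Theorem~\ref{main-Thm-Ric}, valid for \emph{continuous} (not merely $L^\infty$) metrics singular along a compact set of codimension $>2$, and even then its conclusion is a bi-Lipschitz identification with a smooth Einstein metric rather than smoothness in the given smooth structure (full smooth extension is obtained only when $\Sigma$ is a finite set, via Smith--Yang). Your middle block --- the Ricci--DeTurck flow, the evolution inequality $\partial_t\cR\geq\Delta\cR+\tfrac2n\cR^2$ giving $\cR(g(t))\geq\sigma_0(1-\tfrac{2}{n}\sigma_0 t)^{-1}$, the volume bound, the unit-volume rescaling, and the appeal to the smooth rigidity of the $\sigma$-invariant --- matches the paper's strategy. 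But there are two places where your outline assumes precisely what the paper has to work hard for, and one place where you claim more than the method can deliver.

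First, the sentence ``Since $g$ has $\cR\geq\sigma_0$ weakly, the flow preserves this for $t>0$'' conceals the entire analytic core. The paper does not pass through a distributional scalar curvature bound and a preservation theorem for it; instead it proves $\cR(g(t))\geq\sigma(t)$ directly, by combining (a) a quantitative local maximum principle (Proposition~\ref{t-MP}) that yields, for $x$ at $g_0$-distance $r$ from $\Sigma$, the pointwise bound $\cR_{g(t)}(x)\geq\sigma(t)-C t^{\ell}r^{-2(\ell+1)}$ for any $\ell>a+1$; (b) a summation over dyadic shells $\Sigma(2^j\varepsilon_0)\setminus\Sigma(2^{j-1}\varepsilon_0)$ in which the codimension hypothesis $V_h(\Sigma(\varepsilon))\lesssim\varepsilon^{2+a}$ is used against the $t^\ell r^{-2(\ell+1)}$ decay to show $\int_M(\cR_{g(t)}-\sigma(t))_-\,d\mu_{g(t)}=O(t^{a/2})$; and (c) a monotonicity formula (Lemma~\ref{l-R-monotone-new}), $s^{-a/4}\int\varphi(s)\leq t^{-a/4}\int\varphi(t)$, which forces $\varphi\equiv 0$. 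Your intuition that codimension $\geq3$ ``cannot carry a negative singular contribution'' is what (b) makes precise, but the balance between the exponent $\ell$ from the maximum principle and the codimension exponent $a$ in (b), and the weight $t^{-a/4}$ in (c), is delicate --- not a consequence of abstract preservation of weak curvature bounds.

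Second, the final step is wrong as stated. You conclude $g=c\,g_1$ ``together with $g(t)\to g$ in $C^0$'' because ``the DeTurck diffeomorphisms converge to the identity.'' They do converge to the identity in $C^0$ (at rate $t^{1/2}$), but \emph{not} in $C^1$: $|\wn W|\sim t^{-1}$ is non-integrable, so $d\Phi_t$ need not converge. Pulling back a metric is a $C^1$ operation; $C^0$ convergence of $\Phi_t$ to the identity does not let you pass the $C^0$ limit through $\Phi_t^*$. The paper confronts exactly this by introducing a second family $\Psi_t$ normalized at $t=T$, extracting a $C^0$ limit $\Psi_0$ that is only bi-Lipschitz, and using the Calabi--Hartman equation to upgrade $\Psi_0$ to a smooth map on $M\setminus\Sigma$ only. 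Thus the conclusion is $g_0=\Psi_0^*G$ on $M\setminus\Sigma$ and a metric-space isometry globally --- not smoothness of $g_0$ across $\Sigma$ in the given coordinates. To get your stronger conclusion you would need a degree of continuity of $g_0$ (e.g.\ Dini) sufficient to make $\Psi_0$ a $C^1$ diffeomorphism, which the hypotheses do not give.

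Finally, you correctly flag the $L^\infty\to C^0$ reduction as the principal obstacle. The paper simply does not address it; the result it proves assumes continuity. Your proposed alternatives (harmonic-coordinate bootstrapping from $L^\infty$, or Gromov--Lawson surgery near $\Sigma$) both presuppose geometric control near $\Sigma$ that is unavailable in $L^\infty$, and the Yamabe-minimizer route needs uniform elliptic estimates for $u_\varepsilon$ that the singular metric does not obviously supply. As things stand, both the $L^\infty$ part and the full smooth-extension part of the conjecture remain open; your outline recovers the shape of the known partial result but would need the quantitative maximum-principle machinery for the scalar curvature step and would need to weaken the final conclusion.
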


The conjectures are motivated by another conjecture by Geroch 
 that a torus cannot admit a metric with positive scalar curvature, and  metrics with non-negative scalar curvature must be flat, see \cite{Geroch1975,KazdanWarner1975}. The conjecture was proved by Schoen-Yau \cite{SchoenYau1979-3,SchoenYau1979-4} for $n\leq 7$ using minimal surface method and Gromov-Lawson \cite{GromovLawson1980} for general $n$ using Atiyah-Singer index theorem for a twisted spinor bundle on a spin manifold. On the other hand, metrics with low-regularity arise naturally in the compactness theory and in the study of Brown-York quasi-local mass \cite{ShiTam2002}. It is therefore natural to understand metrics with low-regularity and with scalar curvature bounded from below. Unlike the co-dimension three singularity, in case of co-dimension one and co-dimension two singularities, without some assumptions in addition to $L^\infty$ on the metric, one cannot expect that the metric is Ricci flat outside the singular sets even if the metric has nonnegative scalar curvature in the smooth part.   We refer interested readers to the discussions in \cite{LiMantoulidis2019}. 

When $n=3$, Conjecture \ref{conj-1} was confirmed by Li-Mantoulidis using minimal surface method. See also the related results in \cite{ChengLeeTam2021} on Conjecture \ref{conj-2}.
Our main result is the following:

\begin{thm}\label{main-Thm-Ric}
Let $M^n$ be a compact manifold with $\sigma_0=\sigma(M)\leq 0,n\geq 3$ where $\sigma(M)$ is the $\sigma$-invariant of $M$. Suppose $g_0$ is a continuous metric on $M$ such that $g_0\in C^\infty_{loc}(M\setminus \Sigma)$ for some compact set $\Sigma$ of co-dimension at least $2+a$ for some $a>0$, $\mathrm{Vol}(M,g_0)=1$ and $\cR(g_0)\geq \sigma_0$ on $M\setminus \Sigma$. Then there is a homeomorphism $\Psi: M\to M$ which is bi-Lipschitz with respect to some smooth background metric and a Einstein metric $G$ on $M$ with unit volume and with scalar curvature $\sigma_0$ so that
\begin{enumerate}
  \item [(i)] $\Psi$ smooth on $M\setminus\Sigma$. Moreover $g_0=\Psi^*G$ in $M\setminus \Sigma$. In particular, $g_0$ is Einstein on $M\setminus\Sigma$ with scalar curvature $\sigma_0$.
  \item [(ii)]  $\Psi:(M,d_{g_0})\to (M,d_{G})$ is an isometry as metric spaces, where $d_{g_0}$ and $d_{\wt g}$ are the distance metrics induced by $g_0$ and $G$ respectively.
\end{enumerate}

Furthermore, if $\Sigma$ consists of only isolated points,  then $g_0$ is a smooth metric with respect to   a possibly different smooth structure on $M$.
\end{thm}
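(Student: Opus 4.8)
The plan is to smooth $g_0$ by the Ricci--DeTurck flow, show that the smoothed metrics are immediately Einstein, and then pass to the limit as $t\to 0$; from the limiting Einstein metric and the limiting DeTurck diffeomorphism one reads off $G$ and $\Psi$.

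First I would run the Ricci--DeTurck flow with a fixed smooth background metric $h$ and initial data $g_0$. Since $g_0$ is continuous on $M$ and smooth off the compact set $\Sigma$ of codimension $\ge 2+a$, I expect a smooth solution $g(t)=\phi_t^*\hat g(t)$ on $(0,T]$, where $\hat g(t)$ solves Ricci flow and $\phi_t\colon M\to M$ are diffeomorphisms, enjoying: $g(t)\to g_0$ in $C^0(M)$ and in $C^\infty_{loc}(M\setminus\Sigma)$; the $\phi_t$ uniformly bi-Lipschitz with respect to $h$, smooth on $M\setminus\Sigma$, and converging as $t\to 0$ to a bi-Lipschitz homeomorphism $\Psi$ of $M$ that is smooth on $M\setminus\Sigma$; and the sharp scalar curvature lower bound $\cR(g(t))\ge \sigma_0/(1-\tfrac{2\sigma_0}{n}t)$ on all of $M$. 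Setting up this package — short-time existence of the flow from merely continuous data, uniform bi-Lipschitz control of the DeTurck diffeomorphisms up to $t=0$, and, above all, \emph{propagation of the scalar lower bound across $\Sigma$} by the parabolic maximum principle — is the technical core, and the hypothesis $\operatorname{codim}\Sigma\ge 2+a>2$ enters precisely here: it makes $\Sigma$ of zero $2$-capacity, which is what allows cutoff functions vanishing near $\Sigma$ to be inserted with vanishing error in the energy estimate for $\cR$. I regard this as the main obstacle; once it is in hand the rest is comparatively soft.

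Granting this, the smoothed metrics are Einstein for every $t$. Indeed $g(t)\to g_0$ in $C^0(M)$ forces $\mathrm{Vol}(M,g(t))\to \mathrm{Vol}(M,g_0)=1$, and a Gr\"onwall estimate from $\tfrac{d}{dt}\mathrm{Vol}(M,g(t))=-\int_M\cR(g(t))\,d\mu_{g(t)}$ together with the sharp lower bound gives
\[
\mathrm{Vol}(M,g(t))\le\Big(1-\tfrac{2\sigma_0}{n}t\Big)^{n/2},\qquad \cR(g(t))\ge\frac{\sigma_0}{1-\tfrac{2\sigma_0}{n}t}.
\]
Hence the unit-volume rescaling $\tilde g(t)=\mathrm{Vol}(M,g(t))^{-2/n}\,g(t)$ satisfies $\cR(\tilde g(t))=\mathrm{Vol}(M,g(t))^{2/n}\cR(g(t))\ge \sigma_0$, as one checks from $\sigma_0\le 0$ in the sign bookkeeping. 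Thus $\tilde g(t)$ is a \emph{smooth} unit-volume metric with $\cR(\tilde g(t))\ge \sigma_0=\sigma(M)$, so by the classical rigidity recalled in the introduction $\tilde g(t)$ is Einstein with $\cR(\tilde g(t))\equiv\sigma_0$; therefore $g(t)$, and with it $\hat g(t)$, is Einstein for every $t\in(0,T]$. An Einstein metric evolves under Ricci flow only by homothety, so $\hat g(t)=c(t)\,G$ for a fixed Einstein metric $G$, which I normalize to unit volume; then $\cR(G)=\sigma_0$ by the previous sentence.

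It remains to let $t\to 0$. From $g(t)=c(t)\,\phi_t^*G$ and $\mathrm{Vol}(M,g(t))=c(t)^{n/2}\to 1$ we get $c(t)\to 1$; on $M\setminus\Sigma$, where $g(t)\to g_0$ and $\phi_t\to\Psi$ smoothly, this yields $g_0=\Psi^*G$ there, which is (i) (in particular $g_0$ is Einstein on $M\setminus\Sigma$ with scalar curvature $\sigma_0$). For (ii): $C^0(M)$-convergence $g(t)\to g_0$ gives $d_{g(t)}\to d_{g_0}$ uniformly, while $d_{g(t)}(x,y)=c(t)^{1/2}\,d_G(\phi_t(x),\phi_t(y))\to d_G(\Psi(x),\Psi(y))$ because $c(t)\to1$ and $\phi_t\to\Psi$ uniformly (the $\phi_t$ being uniformly bi-Lipschitz, hence equicontinuous, and converging on the dense set $M\setminus\Sigma$); so $d_{g_0}(x,y)=d_G(\Psi(x),\Psi(y))$, i.e.\ $\Psi$ is an isometry of metric spaces. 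Finally, when $\Sigma$ is a finite set, each $p\in\Sigma$ has codimension $n\ge 3$ and $g_0$ is a continuous metric that is Einstein on a punctured neighborhood of $p$; a removable-singularity argument there — using the high codimension of a point together with the simple connectivity of $B^n\setminus\{0\}$, so that $\Psi$ can be straightened near $p$ (equivalently, the Einstein equation in $G$-harmonic coordinates is solved across the puncture) — produces a homeomorphism of a neighborhood of $p$ onto a Euclidean ball in which $g_0$ is smooth. Taking these neighborhoods disjoint and noting that all transition maps with the smooth charts on $M\setminus\Sigma$ are smooth, these charts define a smooth structure on $M$, possibly different from the original, with respect to which $g_0$ is a smooth metric; this is the only step that genuinely uses that $\Sigma$ is $0$-dimensional, since for positive-dimensional $\Sigma$ one cannot in general straighten $\Psi$ across $\Sigma$.
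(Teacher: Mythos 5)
Your strategy coincides with the paper's at a high level: regularize by Ricci--DeTurck flow, propagate a scalar-curvature lower bound, use the unit-volume rescaling together with the classical rigidity to conclude each $g(t)$ is (a homothety of a fixed) Einstein metric, and then send $t\to 0$ through the DeTurck diffeomorphisms to produce $\Psi$ and $G$. The ``soft'' parts -- the Gr\"onwall volume bound, the rescaling bookkeeping showing $\cR(\tilde g(t))\ge\sigma_0$, the identity $g_0=\Psi^*G$ on $M\setminus\Sigma$ via $C^\infty_{loc}$ convergence, the metric-space isometry via uniform $C^0$ convergence, and the appeal to removable-singularity theory when $\Sigma$ is a finite set -- all match the paper's argument in substance.

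The genuine gap is in the step you yourself flag as the ``technical core,'' namely the propagation of $\cR(g(t))\ge\sigma_0/(1-\tfrac{2}{n}\sigma_0 t)$ across $\Sigma$. You attribute this to $\Sigma$ having zero $2$-capacity, so that cutoffs vanishing near $\Sigma$ can be inserted ``with vanishing error in the energy estimate for $\cR$.'' That heuristic does not close on its own: a cutoff $\phi_\e$ with $|\nabla\phi_\e|\lesssim\e^{-1}$ gives $\int|\nabla\phi_\e|^2\lesssim\e^{-2}V(\Sigma(2\e))\lesssim\e^a$, but the error terms in the energy estimate carry factors of $\cR$ itself, and near $\Sigma$ at small times the flow only satisfies $|\Rm(g(t))|\lesssim\delta/t$, so $(\cR-\sigma(t))_-$ can be of size $t^{-1}$ on a set of volume $\sim\e^{2+a}$. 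The capacity argument does not see this blowup. What the paper actually proves and uses are three interlocking quantitative facts: (a) the improved curvature estimate of Huang--Tam (Proposition 2.2 here) letting one take the constant $\delta$ in $|\Rm(g(t))|\le\delta/t$ to be $\delta=a/4$, which is essential for the exponents to match; (b) a local parabolic maximum principle (Proposition 3.1) giving the pointwise bound $\cR_{g(t_0)}(x_0)\ge\sigma(t_0)-Ct_0^{\,\ell}\,r_0^{-2(\ell+1)}$ for $r_0=d_{g_0}(x_0,\Sigma)$ and $t_0\lesssim r_0^2$, with $\ell$ arbitrarily large; and (c) an almost-monotonicity formula (Lemma 3.2) for $t^{-a/4}\int_M(\cR_{g(t)}-\sigma(t))_-\,d\mu_{g(t)}$ along the corresponding Ricci flow. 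One then decomposes the integral over dyadic annuli $\Sigma(2^j\e_0)\setminus\Sigma(2^{j-1}\e_0)$ at the scale $t_0\sim\e_0^2$, using (a)$+$volume bound on the innermost annulus and (b)$+$volume bound on the outer ones, to show $t_0^{-a/4}\int\varphi(t_0)\to 0$ and hence, by (c), that $\varphi\equiv 0$. Your sketch replaces (b) and (c) and the dyadic balance with a capacity assertion that, as stated, does not account for the $t^{-1}$ blowup and therefore does not establish the claimed scalar-curvature lower bound.

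Two smaller remarks. First, in the isolated-points case the removable singularity theorem you invoke (Smith--Yang) requires a uniform curvature bound on $g_0$ near each $p\in\Sigma$; the paper extracts this from $\wh g(t)=\Psi_t^*\wh g(T)$ having curvature independent of $t$, a point worth making explicit rather than folding into ``harmonic coordinates across the puncture.'' Second, the claim that $\phi_t$ is uniformly bi-Lipschitz up to $t=0$ and converges to a bi-Lipschitz homeomorphism is correct but needs the bound $|W|_h\lesssim t^{-1/2}$ and the second-order ODE for $\Psi_t$ (as in Calabi--Hartman) to get smoothness of $\Psi_0$ off $\Sigma$; these are stated as expectations in your write-up but are where the $C^0$ and $C^\infty_{loc}$ regularity of the limiting map actually comes from.
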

For a complete smooth Riemannian manifold $(M^n, h)$, a compact set  $\Sigma$ of $M$ is said to have co-dimension at least $\mathfrak{l}_0>0$ if there exist $b>0$ and $C>0$  such that for all $0<\e\le b$
\be\label{e-codim}
V_{h}(\Sigma(\e))\le   C\e^{\mathfrak{l}_0}
\ee
where $V_h$ is the volume with respect to $h$ and
$$
\Sigma(\e)=\{x\in M|\ d_h(x,\Sigma)<\e\}.
$$
2It is easy to see that the definition does not depend on the smooth metric $h$. Moreover, if the upper Minkowski dimension of $\Sigma$ is less than $\mathfrak{l}_0$, then the co-dimension of $\Sigma$ is at least  $  n-\mathfrak{l}_0>0$. In case $\Sigma$ is an embedded submanifold of dimension $k$, then its co-dimension is at most $n-k$. It is not difficult to construct example of $\Sigma$ with non-integral co-dimension. For instance, one might consider $\Sigma=\{p_k\}_{k=1}^\infty\subset M^n$ with $d_h(p_k,p_{k+1})\leq  k^{-\a}$ for some $\a>1$ so that $p_k\to p_\infty\in M$. In this way, the upper Minkowski dimension of $\Sigma$ will be at most  $  n\a^{-1}\in (0,1)$ and hence the co-dimension is at least $  n-n\a^{-1}$.  Hence Theorem \ref{main-Thm-Ric} can be applied to singularities of this kind of $\Sigma$ with $\a>\frac{n}{n-1}$. See Corollary \ref{c-points} for more details. In particular, Theorem \ref{main-Thm-Ric} partially confirms Schoen conjecture in the category of $C^0$ metrics. As an application of the method, we prove that $C^0$ metrics with singularity in form of Theorem~\ref{main-Thm-Ric} has global scalar curvature lower bound in a weak sense, see Corollary~\ref{Cor:scalar-lower-C0}.

 On the non-compact side, Schoen and Yau \cite{SchoenYau1979,SchoenYau1979-2,SchoenYau2017} proved the positive mass theorem which asserts that the Arnowitt-Deser-Misner (ADM) mass of each end of an $n$-dimensional asymptotically flat (AF) manifold with nonnegative scalar curvature is non-negative and if the ADM mass of an end is zero, then the manifold is isometric to the Euclidean space, see also \cite{Bartnik1986,ParkerTaubes1982,Witten1981} for the earlier works. The method of proof of Theorem \ref{main-Thm-Ric}   also enables us to prove the following positive mass theorem:
\begin{thm}\label{main-Thm-PMT}
Let $(M^n,g_0)$ be a AF manifold with $n\geq 3$, $g_0$ is a continuous metric on $M$ such that $g_0$ is smooth away from some compact set $\Sigma$ of $M$ of co-dimension at least $\geq 2+a$ for some $a>0$. Suppose $\cR(g_0)\geq 0$ outside $\Sigma$, then the ADM mass of each end is nonnegative. Moreover, if the ADM mass of one of the ends is zero, then $(M,g_0)$ is isometric to $(\mathbb{R}^n,g_{euc})$ as a metric space and is flat outside $\Sigma$.
\end{thm}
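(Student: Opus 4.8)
The plan is to mirror the proof of Theorem \ref{main-Thm-Ric}: regularize $g_0$ by a Ricci--DeTurck flow, transport the scalar curvature hypothesis and the asymptotic data to the smooth approximations, apply the classical (smooth) positive mass theorem there, and pass to the limit. I would fix a smooth asymptotically flat background metric $h$ with $\Lambda^{-1}h\le g_0\le\Lambda h$ and run the $h$--Ricci--DeTurck flow with initial data $g_0$. Using the existence and a priori estimates for this flow from continuous initial data on complete asymptotically flat manifolds established in the course of proving Theorem \ref{main-Thm-Ric}, one obtains a smooth solution $g(t)$, $t\in(0,T]$, with each $g(t)$ asymptotically flat, $\Lambda_1^{-1}h\le g(t)\le\Lambda_1 h$, $g(t)\to g_0$ locally uniformly as $t\to0^+$ and in $C^\infty_{loc}(M\setminus\Sigma)$, together with a family of diffeomorphisms $\varphi_t$ asymptotic to the identity on each end such that $\varphi_t^*g(t)$ solves the Ricci flow.

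The codimension hypothesis $\mathfrak{l}_0\ge 2+a$ enters in two ways. First, since a compact set of co-dimension strictly greater than $2$ has vanishing $2$-capacity, the continuity of $g_0$ and $\cR(g_0)\ge0$ on $M\setminus\Sigma$ force $\cR(g_0)\ge 0$ in the weak sense appropriate to $C^0$ metrics, with no ``capacity mass'' of the distributional scalar curvature concentrating on $\Sigma$; preservation of weak lower scalar curvature bounds under the regularizing Ricci--DeTurck flow then yields $\cR(g(t))\ge0$ on all of $M$ for $t\in(0,T]$ — or, if one only tracks the effective estimate, $\cR(g(t))\ge-\e(t)$ with $\e(t)\to0$ and $\int_M(\cR(g(t)))_-\,d\mu_{g(t)}\to0$. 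Second, because $g_0$ is smooth and asymptotically flat outside a compact set, parabolic interior estimates away from $\Sigma$ show that the flow preserves the asymptotic expansion on every end, so the ADM mass $\mathfrak m(g(t))$ of each end is defined and equals $\mathfrak m(g_0)$ in the limit $t\to0^+$ (indeed is constant in $t$, the mass integral living at infinity where the flow is a smooth Ricci flow).

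Granting this, the positive mass theorem for the smooth complete asymptotically flat metrics $g(t)$ — after, if necessary, a conformal correction $u_t^{4/(n-2)}g(t)$ with $u_t\to1$ at infinity absorbing the small negative part of $\cR(g(t))$, which changes the mass by $o(1)$ — gives $\mathfrak m(g(t))\ge0$, hence $\mathfrak m(g_0)\ge0$ for each end. Suppose now that some end has $\mathfrak m(g_0)=0$. Then $\mathfrak m(g(t))=0$ while $\cR(g(t))\ge0$, so the equality case of the smooth positive mass theorem forces $g(t)$ to be flat for every $t>0$; consequently $M\cong\R^n$ and $(M,g(t))$ is isometric to $(\R^n,g_{euc})$, say via $\Phi_t$ normalized at a basepoint away from $\Sigma$. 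Letting $t\to0$ and using $g(t)\to g_0$ in $C^\infty_{loc}(M\setminus\Sigma)$, $g_0$ is flat on $M\setminus\Sigma$; moreover, exactly as in Theorem \ref{main-Thm-Ric}, the flat charts $\Phi_t$ (equivalently, the $\varphi_t$ modified by the $\Phi_t$) subconverge, by the uniform bound $\Lambda_1^{-1}h\le g(t)\le\Lambda_1 h$, to a bi-Lipschitz homeomorphism $\Psi\colon M\to\R^n$ with $g_0=\Psi^*g_{euc}$ on $M\setminus\Sigma$. Finally, since $\mathrm{codim}\,\Sigma>1$ any two points of $M$ are joined by paths avoiding $\Sigma$ of length arbitrarily close to $d_{g_0}$, so $(M,d_{g_0})$ is the metric completion of $(M\setminus\Sigma,d_{g_0})$ and $\Psi$ extends to an isometry of metric spaces $(M,d_{g_0})\to(\R^n,d_{euc})$.

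The main difficulties are two. The first is analytic: running the Ricci--DeTurck flow and obtaining the required estimates on a \emph{non-compact} asymptotically flat manifold from merely continuous initial data, and proving that the flow neither destroys asymptotic flatness nor alters the ADM mass — this needs the interior smoothing estimates used for Theorem \ref{main-Thm-Ric} together with weighted estimates on the ends. The second, and the real heart of the rigidity, is that the \emph{approximate} bounds $\cR(g(t))\ge-\e(t)$, $\mathfrak m(g(t))\to0$ are intrinsically too weak — a metric with small but not identically vanishing negative scalar curvature and zero ADM mass need not be anywhere near flat — so one must genuinely produce the \emph{exact} hypotheses $\cR(g(t))\ge0$ and $\mathfrak m(g(t))=0$ to which the rigidity of the smooth positive mass theorem applies. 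This is exactly where $\mathrm{codim}\,\Sigma\ge 2+a>2$ is indispensable: it forces $\Sigma$ to have no $2$-capacity, so that the weak scalar curvature lower bound of $g_0$ is a bona fide global one and is exactly preserved by the flow, ruling out e.g. codimension-two conical behaviour along $\Sigma$, which would be consistent with $\cR\ge 0$ off $\Sigma$ but not with flatness.
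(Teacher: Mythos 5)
Your overall strategy---regularize $g_0$ by a Ricci--DeTurck flow, transport the asymptotically flat structure and the mass, establish $\cR(g(t))\ge 0$, then apply the smooth positive mass theorem and pass to the limit---is the same as the paper's (Proposition~\ref{AF-hflow} plus the scalar curvature argument carried over from Theorem~\ref{main-Thm-Ric}). The divergence, and the gap, is in how the exact bound $\cR(g(t))\ge 0$ is produced.

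You propose to get it from a capacity argument: $\mathrm{codim}\,\Sigma>2$ gives vanishing $2$-capacity, hence "no capacity mass" of the distributional scalar curvature concentrating on $\Sigma$, hence $\cR(g_0)\ge 0$ weakly globally, hence $\cR(g(t))\ge 0$ by preservation under the flow. This chain is not substantiated and does not work at the stated regularity. For a metric that is merely $C^0$ near $\Sigma$ there is no control on $\wn g_0$ there, so the distributional scalar curvature of $g_0$ across $\Sigma$ is not an object to which $2$-capacity removability applies; the settings where such arguments do work ($W^{1,p}$ or distributional-curvature hypotheses as in Lee--LeFloch) are exactly the extra regularity the paper is trying to dispense with. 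Moreover, the Burkhardt--Guim weak lower bound is \emph{defined} in terms of the regularizing flow, so "preservation under the flow" is not a step you can take for free: it is the thing to be proved. The paper's actual mechanism is quantitative, not soft: the local maximum principle (Proposition~\ref{t-MP}) gives $\cR_-(g(t))(x)\lesssim t^\ell d_{g_0}(x,\Sigma)^{-2(\ell+1)}$ away from $\Sigma$ and $\cR_-(g(t))\lesssim t^{-1}$ near $\Sigma$; combined with the co-dimension bound $V(\Sigma(\e))\lesssim\e^{2+a}$ and a dyadic decomposition, one gets $\int_M\cR_-(g(t))\,d\mu_{g(t)}=O(t^{a/2})$. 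As you yourself correctly observe, this decay alone is too weak. The decisive missing ingredient is the monotonicity formula of Lemma~\ref{l-R-monotone-new}: along the flow, $t^{-a/4}\int_M\cR_-(g(t))\,d\mu_{g(t)}$ is non-increasing; since it is $O(t^{a/4})\to 0$, it vanishes identically, and hence $\cR(g(t))\ge 0$ exactly. Without this monotonicity you have flagged the difficulty but not resolved it, and the conformal-correction fallback you mention only rescues non-negativity, not the rigidity.

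A smaller inaccuracy: you assert that the ADM mass of $g(t)$ equals $m_{ADM}(g_0)$ and is constant in $t$. What the paper proves (Proposition~\ref{AF-hflow}) is only the one-sided inequality $m_{ADM,g(t)}(E)\le m_{ADM,g_0}(E)$, obtained by running the flow from approximants $g_{i,0}$ that agree with $g_0$ outside $\Sigma(1/i)$, using McFeron--Sz\'ekelyhidi mass invariance for each $i$, and then controlling the error in the limit $i\to\infty$ via the scalar curvature decay estimate on the ends. The inequality suffices: combined with $m_{ADM,g(t)}\ge 0$ it gives non-negativity, and combined with $m_{ADM,g_0}=0$ it forces $m_{ADM,g(t)}=0$ for the rigidity case.
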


  When the singular set is of lower co-dimension, the related positive mass theorem has been studied by various authors, see \cite{Miao2003,JiangShengZhang2020,Lee2013,LeeLeFloch2015,LiMantoulidis2019,ShiTam2016} and the reference therein. 
Unlike  most of the previous results, we do not assume any $L^p$ bounds on the  first derivative of the metric. We only assume that the metric is $C^0$ at the singular set.

The paper is organized as follows. In Section~\ref{Sec: RF}, we will collect some useful result on the existence of the Ricci-Deturck flows. In Section~\ref{ss-MP}, we will prove a local maximum principle and monotonicity formula along the Ricci-Deturck flows.  In Section~\ref{Sec:compact}, we will prove Theorem~\ref{main-Thm-Ric}. In Section~\ref{Sec:AF}, we will consider the asymptotic flat manifolds and prove Theorem~\ref{main-Thm-PMT}. In this work, the dimension of any manifold is assumed to be at least three.

\section{Preliminaries}\label{Sec: RF}

We would like to regularize the metric using the Ricci flow. We will start with the Ricci-Deturck flow with background metric $h$. We will follow \cite{Simon2002} to call it $h$-flow to emphasis the dependence. We first need some basic facts about the flow. In the following, complete manifolds are referring to either complete non-compact manifold or compact manifolds without boundary.

\subsection{Basic facts on $h$-flow}

Let $(M,h)$ be a complete Riemannian manifold such that for all $i\in\mathbb{N}$, there is $k_i>0$ so that
\begin{equation}\label{background-h-regular}
|\wt\nabla^i \Rm(h)|\leq k_i
\end{equation}
where $\wt \nabla$ denotes the covariant derivative with respect to $h$. By the work of Shi \cite{Shi1989}, we may perturb metrics with bounded curvature slightly so that \eqref{background-h-regular} holds.

A smooth family of metrics $ g(t)$ on $M\times (0,T]$ is said to be a solution to the $h$-flow if it satisfies
\begin{equation}\label{equ:h-flow}
\left\{
\begin{array}{ll}
\partial_t g_{ij}=-2R_{ij}+\nabla_i W_j +\nabla_j W_i;\\
W^k= g^{pq}\left( \Gamma^k_{pq}-\wt\Gamma^k_{pq} \right).
\end{array}
\right.
\end{equation}

To regularize a non-smooth metric, it is also common to consider the Ricci flow which is a smooth family of metric $ \wh g(t)$ satisfying
\begin{equation}
\frac{\partial}{\partial t}  \wh  g_{ij}=-2\Ric(  \wh g)_{ij}.
\end{equation}
If the initial metric $g_0$ is smooth, it is well-known that the Ricci flow is equivalent to the Ricci-Deturck flow in the following sense. Let

$\Phi_t$ be the diffeomorphism given by
\begin{equation}\label{e-Phi}
\left\{
\begin{split}
\frac{\partial}{\partial t}\Phi_t(x)&=-W\left(\Phi_t(x),t \right);\\
\Phi_0(x)&=x.
\end{split}
\right.
\end{equation}
Then the pull-back of the Ricci-Deturck flow $ \wh g(t)=\Phi_t^*  g(t)$ is a Ricci flow solution with $\wh g(0)= g(0)=g_0$. We will interchange between the Ricci flow and Ricci-Deturck flow depending on the purpose.

Before we state the ingredients, we fix some notations. For $\sigma>1$, a continuous metric $g$ is said to be $\sigma$-close to $h$ if
\begin{equation}
\sigma^{-1}h \leq g\leq \sigma h.
\end{equation}
We will also use $a\wedge b$ to denote $\min\{a,b\}$ for any $a,b\in \mathbb{R}$.

In \cite{Simon2002}, Simon obtained the following regularization result for continuous metrics using the $h$-flow (i.e. Ricci-Deturck flow), see also \cite{Burkhardt2019,KochLamm2012,Shi1989}.
\begin{thm}[Simon, Theorem 5.2 in \cite{Simon2002}]\label{Simon-Theorem} There is $\e_n>0$ such that the following is true: Let $(M,h)$ be a complete manifold satisfying \eqref{background-h-regular}. If $g_0$ is a continuous metric on $M$ such that $g_0$ is $1+\e_n$ close to $h$, then the \eqref{equ:h-flow} admits a smooth solution $g(t)$ on $M\times (0,T_0]$ for some $T_0(n,k_0)>0$ so that
\begin{enumerate}
\item[(i)] $$\lim_{t\to 0}\sup_{\Omega}|g(t)-g_0|=0, \; \forall \; \Omega\Subset M;$$
\item[(ii)] For all $i\in \mathbb{N}$, there is $C_i>0$ depending only on $n,k_0,...k_i$ so that
$$\sup_M |\wt \nabla^i g(t)|\leq \frac{C_i}{t^{i/2}}.$$
\item[(iii)]$g(t)$ is $1+2\e_n$ close to $h$ for all $t\in (0,T_0]$.
\end{enumerate}
Here the norm $|\cdot|$ and connection $\wt\nabla$ are with respect to $h$.
\end{thm}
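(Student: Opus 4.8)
\emph{Plan of proof (following \cite{Shi1989,Simon2002}).} The plan is to rewrite \eqref{equ:h-flow} as a strictly parabolic quasilinear system for $g$ alone, approximate $g_0$ by smooth metrics, run the flow from each, establish a priori estimates uniform in the approximation and depending only on $n,k_0,\dots$, and pass to the limit. Writing $\wt\nabla$ for the Levi-Civita connection of the fixed background $h$, equation \eqref{equ:h-flow} is equivalent to
\be\label{e-sketch-rdt}
\p_t g_{ij}=g^{ab}\wt\nabla_a\wt\nabla_b g_{ij}+\mathcal Q+\mathcal L,\qquad
\mathcal Q=g^{-1}\ast g^{-1}\ast\wt\nabla g\ast\wt\nabla g,\quad
\mathcal L=g^{-1}\ast g\ast\Rm(h),
\ee
with $\ast$ denoting contractions in $h$ and all coefficients universal; if $g$ is $\sigma$-close to $h$ then $g^{ab}\wt\nabla_a\wt\nabla_b$ is uniformly elliptic and $|\mathcal Q|_h\le C(n)\sigma^4|\wt\nabla g|_h^2$, $|\mathcal L|_h\le C(n)\sigma^2k_0$. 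The only non-classical feature is that $g_0$ is merely continuous, so the $C^0$ estimates below cannot be obtained by comparing $g(t)$ to a smooth initial metric; they will come instead from an absorption trick.

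First mollify $g_0$ with respect to $h$ to obtain smooth metrics $g_0^{(k)}\to g_0$ in $C^\infty_{loc}$-sense, sorry, in $C^0_{loc}(M)$, each of bounded curvature and, for $k$ large, $(1+\tfrac32\e_n)$-close to $h$; Shi's theorem \cite{Shi1989} gives a smooth solution $g^{(k)}(t)$ of \eqref{equ:h-flow} with $g^{(k)}(0)=g_0^{(k)}$ on a maximal interval. The two estimates, proved together by a continuity argument in $t$, are: (A) there is $T_0=T_0(n,k_0)>0$, independent of $k$, with $g^{(k)}(t)$ remaining $(1+2\e_n)$-close to $h$ on $[0,T_0]$; and (B) $\sup_M|\wt\nabla^i g^{(k)}(t)|_h^2\le C_i(n,k_0,\dots,k_i)\,t^{-i}$ on $(0,T_0]$. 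For (A), from $\wt\nabla h=0$ one gets
\be\label{e-sketch-Cz}
\lf(\p_t-g^{ab}\wt\nabla_a\wt\nabla_b\ri)|g-h|_h^2= \la g-h,\,\mathcal Q+\mathcal L\ra_h-2\,g^{ab}h^{ik}h^{jl}\wt\nabla_a g_{ij}\wt\nabla_b g_{kl} ,
\ee
and since the last term is $\le-\sigma^{-1}|\wt\nabla g|_h^2$ while $|\la g-h,\mathcal Q\ra_h|\le C(n)\sigma^4|g-h|_h|\wt\nabla g|_h^2$, once $|g-h|_h$ is below a dimensional threshold the quadratic gradient contribution is absorbed and $\lf(\p_t-g^{ab}\wt\nabla_a\wt\nabla_b\ri)|g-h|_h^2\le C(n)k_0\big(1+|g-h|_h^2\big)$; the maximum principle (valid for each fixed $k$ by bounded geometry) then keeps $|g^{(k)}(t)-h|_h$ small for $t\le T_0(n,k_0)$, and in particular the solution exists on $[0,T_0]$. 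Estimate (B) is the standard Bernstein/Shi iteration: once (A) makes the flow uniformly parabolic, $t|\wt\nabla g|_h^2$ is controlled by applying the maximum principle to $F=(A-|g-h|_h^2)\,t\,|\wt\nabla g|_h^2$ with $A$ large, the negative term from $-|g-h|_h^2$ absorbing the $|\wt\nabla g|_h^4$ in the evolution of $|\wt\nabla g|_h^2$; higher $i$ follow by induction.

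By (A), (B) and interior parabolic Schauder estimates on compact subsets of $M\times(0,T_0]$, a subsequence of $g^{(k)}$ converges in $C^\infty_{loc}(M\times(0,T_0])$ to a smooth solution $g(t)$ of \eqref{equ:h-flow}; (iii) and (ii) are the $k\to\infty$ limits of (A) and (B). For (i), fix $x_0\in M$ and compare $g$ with the frozen tensor $g_0(x_0)$, expressed as a constant in $h$-geodesic normal coordinates centred at $x_0$: the computation \eqref{e-sketch-Cz} with $h$ replaced by this tensor produces the same dissipation term, plus extra terms of order $k_0$ (and of order $\rho k_0$ in the gradient) coming from its non-parallelism, and the same absorption gives, on the region where $|g-g_0(x_0)|_h<\e_n$,
\be\label{e-sketch-loc}
\lf(\p_t-g^{ab}\wt\nabla_a\wt\nabla_b\ri)|g-g_0(x_0)|_h^2\le C(n)k_0\big(1+|g-g_0(x_0)|_h^2\big).
\ee
A localized maximum principle with a cutoff supported in a small $h$-ball $B_{2\rho}(x_0)$ then bounds $\sup_{B_\rho(x_0)}|g^{(k)}(\cdot,t)-g_0(x_0)|_h^2$ by $C(n,k_0)\big(\omega_{g_0}(2\rho)^2+\|g_0^{(k)}-g_0\|_{C^0(B_{2\rho}(x_0))}^2+\rho^2+t\,\rho^{-2}\big)$, $\omega_{g_0}$ being the modulus of continuity of $g_0$ in the $h$-distance, as long as this quantity stays $<\e_n^2$; choosing first $\rho$ small, then $t\le c\rho^2$, then $k$ large makes it arbitrarily small, locally uniformly in $x_0$, and passing to the limit in $k$ gives $\lim_{t\to0}\sup_\Omega|g(t)-g_0|=0$ for every $\Omega\Subset M$, i.e.\ (i).

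\emph{Main obstacle.} The steps that are by now routine are the mollification, Shi's existence, the Bernstein iteration (B) and the compactness argument; the one place the bare $C^0$ hypothesis is genuinely felt — and hence the main obstacle — is the pair of $C^0$ estimates (A) and the localized bound behind (i). What makes them work is purely structural: in the evolution of $|g-h|_h^2$ (resp.\ $|g-g_0(x_0)|_h^2$) the quadratic gradient term $\mathcal Q$ never appears alone but always multiplied by $|g-h|_h$ (resp.\ $|g-g_0(x_0)|_h$), so it is swallowed by the dissipation $-2\,g^{ab}h^{ik}h^{jl}\wt\nabla_a g_{ij}\wt\nabla_b g_{kl}$ as soon as that factor is small — and for the localized estimate the required smallness is exactly what the continuity of $g_0$ provides on a small enough ball. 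A secondary technical point, the justification of the maximum principle on the non-compact part of $M$, is handled using the bounded geometry of each $g^{(k)}$ along the flow, exactly as in \cite{Shi1989,Simon2002}.
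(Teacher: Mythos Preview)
The paper does not give its own proof of this statement: Theorem~\ref{Simon-Theorem} is quoted verbatim from Simon \cite[Theorem~5.2]{Simon2002} and used as a black box, so there is nothing in the paper to compare your argument against. Your sketch is a faithful reconstruction of Simon's original proof --- the quasilinear rewriting \eqref{e-sketch-rdt}, the mollification of $g_0$, Shi's short-time existence for each approximant, the $C^0$ closeness estimate via the absorption of $\mathcal Q$ by the dissipation term in \eqref{e-sketch-Cz}, the Bernstein--Shi iteration for the derivative bounds, and the localized $C^0$ argument for the initial condition are exactly the ingredients of \cite{Simon2002}. Your identification of the ``main obstacle'' (the structural fact that $\mathcal Q$ enters the evolution of $|g-h|_h^2$ only multiplied by $|g-h|_h$, so continuity of $g_0$ suffices) is precisely Simon's key observation.
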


\begin{rem}
When the initial metric is only $C^0$, the properties of the regularizing Ricci flow has been extensively studied by Burkhardt-Guim in \cite{Burkhardt2019}. Since we need to perform some local analysis away from singular set, we stick with the original approach by Simon.
\end{rem}

In particular, it was shown that the continuous metric $g_0$ can be smoothed such that the curvature of $g(t)$ is bounded above by $\a t^{-1}$ for some $\a>0$. In \cite{HuangTam2018}, Huang and the second named author  improve the estimate such that $\a$ can be made arbitrarily close to $0$ if $g(t)$ is further close to $h$ in $C^0$ topology.
\begin{prop}\label{hFlow-improve}
For any $\delta>0, k_0>0$, there is $T_1(n,\delta, k_0),\sigma(n,\delta)>0$ such that the following holds. Let $(M,h)$ be a complete manifold with $|\Rm(h)|+|\wn\Rm(h)|+|\wn^2\Rm(h)|\le k_0$. If $g(t)$ is a smooth solution to the $h$-flow on $M\times [0,S]$ obtained in Theorem~\ref{Simon-Theorem} and $g_0$ is $1+\sigma(n,\delta)$ close to $h$, then we have
$$|\wt \nabla g(t)|^2+|\wt\nabla^2 g(t)|+|\Rm_{g(t)}|\leq \frac{\delta }{t}$$
on $M\times (0,T_1\wedge S]$ where $\wt \nabla$ is the covariant derivative with respect to $h$.
\end{prop}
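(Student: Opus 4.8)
The plan is to prove Proposition~\ref{hFlow-improve} by deriving an improved interior gradient/curvature estimate for the $h$-flow under the additional hypothesis that $g_0$ is very $C^0$-close to $h$. The key structural observation is that the quantities $|\wt\nabla g|^2$, $|\wt\nabla^2 g|$ and $|\Rm_{g(t)}|$ all satisfy parabolic differential inequalities along the $h$-flow of the schematic form $\heat u \le C(u^2 + u\cdot|\wt\nabla g|^2 + \dots)$, where the coefficients depend only on $k_0$ and the $C^0$-closeness constant $\sigma$. From Theorem~\ref{Simon-Theorem} we already know the a priori bound $|\wt\nabla^i g(t)|\le C_i t^{-i/2}$; the point is to upgrade the constant in front of $t^{-1}$ from $C_1$ (fixed) to an arbitrarily small $\delta$, at the cost of shrinking the time interval to $T_1(n,\delta,k_0)$ and demanding $g_0$ be $1+\sigma(n,\delta)$-close to $h$.

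The main steps I would carry out are as follows. First, introduce the auxiliary function $F = t\big(|\wt\nabla g|^2 + |\wt\nabla^2 g| + |\Rm_{g(t)}|\big)$ (possibly with carefully chosen weights between the three terms, or treating them in stages: first bound $t|\wt\nabla g|^2$, then feed that into the bound for $t|\wt\nabla^2 g|$ and $t|\Rm_{g(t)}|$, since the curvature of $g(t)$ is controlled by $\wt\nabla^2 g$, $(\wt\nabla g)^2$ and $\Rm(h)$). Second, compute $\heat F$ using the evolution equations for these quantities along the $h$-flow, keeping track of the good negative gradient terms $-t|\wt\nabla^2 g|^2$ etc., and absorbing the bad terms; schematically one gets $\heat F \le \frac{1}{t}F(CF - 1) + Ck_0 t$ on the region where the $C^0$-closeness holds. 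Third, apply the maximum principle on $M\times[0,\tau]$: at an interior maximum of $F$ one has $\heat F \ge 0$, forcing $CF \ge 1 - Ck_0 t^2/F$, which after a short-time argument and using $F\to 0$ as $t\to 0$ (from Theorem~\ref{Simon-Theorem}(i) and (ii)) shows $F$ cannot be large; the role of $\sigma(n,\delta)$ is to make the initial $C^0$-error, hence the coefficient $C$ of the quadratic term, small enough that the threshold $F \approx 1/C$ is pushed up to $\delta$. One technical subtlety: since $M$ may be non-compact, the maximum principle needs the boundedness of $F$ on $M\times[0,\tau]$, which is exactly what Theorem~\ref{Simon-Theorem}(ii) supplies. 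I would cite \cite{HuangTam2018} for the detailed computation, since this is stated there, and only indicate the structure.

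The hard part will be bookkeeping the evolution inequalities so that every ``bad'' term is either quadratic in $F$ (absorbable once $F$ is small, which is arranged by choosing $\sigma$ small and $t$ small) or is $O(k_0 t)$ (harmless on a short interval), with no stray terms that are merely $O(1)$ or $O(F/t)$ with an $O(1)$ coefficient that cannot be beaten down by the closeness assumption. In particular one must check that the interaction between $\wt\nabla^2 g$ and $\Rm_{g(t)}$ does not produce an uncontrolled feedback loop; the resolution is that $\Rm_{g(t)} = \Rm(h) + \wt\nabla^2 g \ast g^{-1} + (\wt\nabla g)^{\ast 2}\ast g^{-2}$ schematically, so controlling $t|\wt\nabla^2 g| + t|\wt\nabla g|^2$ together with the $k_0$-bound on $\Rm(h)$ automatically controls $t|\Rm_{g(t)}|$, and one need only run the maximum principle on the combination $t|\wt\nabla g|^2 + t|\wt\nabla^2 g|$ (or its square-sum variant). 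The other delicate point is the passage $t\to 0$: one must ensure $F$ extends continuously to $0$ at $t=0$ with value $0$, which follows from parts (i)--(ii) of Theorem~\ref{Simon-Theorem} but should be stated explicitly so that the maximum of $F$ over $M\times[0,\tau]$, if positive, is attained at an interior time where the differential inequality applies.
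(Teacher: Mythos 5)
Your proposal correctly identifies the same source as the paper: the published proof of Proposition~\ref{hFlow-improve} is just a citation to \cite[Lemmas 5.1, 5.2]{HuangTam2018}, together with the remark that the compact case follows by dropping the cutoff function from the maximum-principle argument. So insofar as you defer to \cite{HuangTam2018} for the computation, you and the paper agree. The internal mechanism you sketch, however, is not the one that delivers the small constant $\delta$, and I want to flag this because a reader who tried to expand your sketch would get stuck.

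The auxiliary function $F = t\big(|\wt\nabla g|^2 + |\wt\nabla^2 g| + |\Rm_{g(t)}|\big)$ is the naive one, and the maximum principle applied to it only recovers the original $C(n,k_0)/t$ bound of Theorem~\ref{Simon-Theorem}(ii) with a \emph{fixed} constant, not an improved one: differentiating the explicit factor $t$ produces a $+\tfrac{1}{t}F$ term with the wrong sign, and nothing in the Bernstein inequality for $|\wt\nabla g|^2$ beats this down below a dimensional threshold. Your claim that making ``the coefficient $C$ of the quadratic term'' small pushes the threshold $1/C$ down to $\delta$ is also reversed --- $1/C \le \delta$ requires $C$ \emph{large} --- and in any case that $C$ is essentially $C(n)$ and does not degenerate as $\sigma \to 0$. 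The actual mechanism in \cite{HuangTam2018}, following Shi's interpolation trick, is a product ansatz of the form $\Phi = \big(a + |g(t) - h|^2_h\big)\,|\wt\nabla g|^2$: the evolution of $|g-h|^2$ along the $h$-flow produces a genuinely negative term $-c|\wt\nabla g|^2$, so $\heat \Phi$ acquires $-c|\wt\nabla g|^4$ to absorb the bad quartic terms, and the prefactor satisfies $a + |g-h|^2 \le a + 2\sigma + Ck_0 t$ on the time interval in question. Choosing $a$, $\sigma(n,\delta)$, and $T_1(n,\delta,k_0)$ small then yields $t|\wt\nabla g|^2 \le \delta$, and the bounds on $t|\wt\nabla^2 g|$ and $t|\Rm_{g(t)}|$ follow by a similar staged argument. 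The smallness enters through the prefactor $a + |g-h|^2$, not through any coefficient in the differential inequality --- this is the piece your sketch is missing. Since the citation to \cite{HuangTam2018} is what actually carries the proof (as it does in the paper), the proposal is acceptable as a proof-by-citation, but the sketch as written would not survive being expanded into a standalone argument.
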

\begin{proof}
It follows from \cite[Lemma 5.1, Lemma 5.2]{HuangTam2018}. The proof in the complete non-compact case can easily be adapted to the compact case by removing the cutoff function in the maximum principle argument.
\end{proof}
We should remark that   $\sigma$ does not depend on $k_0$, even though the time interval may shrink if $k_0$ is large.

The next Proposition illustrates that the $h$-flow is locally uniformly regular up to $t=0$ if the initial metric is locally regular.

\begin{prop}\label{prop-local-spacetime}
Under the assumption of Theorem~\ref{Simon-Theorem}, if $g_0$ is smooth on $\Omega\Subset M$ so that $\sup_\Omega \sum_{m=1}^i|\wt \nabla^m g_0|\leq L_i$, then for all $\Omega'\Subset \Omega$, we have
$$\sup_{\Omega'\times [0,T]}|\wt \nabla^i g(t)|\leq C_0$$
for some $C_0>0$ depending only on $n,i,k_0,...,k_{i},L_1,...,L_i,\Omega'$ and $\Omega$.
\end{prop}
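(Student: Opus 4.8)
The plan is to run an interior parabolic estimate for the $h$-flow, using the already-known facts that $g(t)$ stays uniformly $1+2\e_n$-close to $h$ (Theorem \ref{Simon-Theorem}(iii)) and that $\sup_M |\wt\nabla^i g(t)| \le C_i t^{-i/2}$ for $t>0$ (Theorem \ref{Simon-Theorem}(ii)), together with the new input that on $\Omega$ the initial data is bounded in $C^i$. Since the $h$-flow is a quasilinear parabolic system for $g$ that is uniformly parabolic as long as $g$ is uniformly equivalent to $h$ (which holds globally here), and since its coefficients depend on $h$ through $\wt\Gamma$, $\Rm(h)$ and derivatives thereof — all controlled by $k_0,\dots,k_i$ — the statement is a standard Schauder/bootstrap interior estimate: bounds on the initial data in $\Omega$ plus a global $L^\infty$ bound on $g$ propagate to bounds on all space derivatives of $g$ on $\Omega'\times[0,T]$, with the time direction giving no obstruction because we estimate on a compact sub-parabolic-cylinder whose parabolic boundary at $t=0$ lies inside $\Omega$ where the data is smooth.

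Concretely, first I would fix a nested chain $\Omega'\Subset \Omega''\Subset\cdots\Subset\Omega$ and cutoff functions $\phi_m$ supported in $\Omega^{(m)}$ and equal to $1$ on $\Omega^{(m-1)}$. The zeroth step is already done: $|g|+|g^{-1}|$ is bounded everywhere by $(1+2\e_n)$. For the first derivative, one computes the evolution equation $\p_t \wt\nabla g = \Delta_{g}\wt\nabla g + (\text{lower order in }\wt\nabla g, \text{ plus terms in } \Rm(h),\wt\nabla\Rm(h))$ — schematically $\heat |\wt\nabla g|^2 \le -|\wt\nabla^2 g|^2 + C(|\wt\nabla g|^2 + |\wt\nabla g|^4 + 1)$ with $C=C(n,k_0,k_1)$ using $g\sim h$ — and applies the maximum principle to $\phi_1^2 |\wt\nabla g|^2 + A|g|^2$ on $\Omega''\times[0,T]$. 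The $\phi_1$-gradient terms that arise are absorbed using the $|\wt\nabla^2 g|^2$ good term (or a Bernstein-type trick with an extra cutoff power), and crucially the parabolic-boundary contribution at $t=0$ is bounded by $L_1$ since $\phi_1$ is supported in $\Omega$; the lateral boundary contributes nothing because $\phi_1=0$ there. This yields $\sup_{\Omega'\times[0,T]}|\wt\nabla g|\le C_0(n,k_0,k_1,L_1,\Omega',\Omega)$. One then iterates: with $|\wt\nabla^m g|$ controlled for $m\le i-1$ on a slightly larger domain, the evolution inequality for $|\wt\nabla^i g|^2$ has the form $\heat|\wt\nabla^i g|^2 \le -|\wt\nabla^{i+1}g|^2 + C(1+|\wt\nabla^i g|^2)$ with $C$ now depending also on the already-bounded lower derivatives and on $k_1,\dots,k_i$, and the same cutoff maximum-principle argument on $\Omega^{(i)}\times[0,T]$, using $L_i$ to bound the $t=0$ slice, closes the induction.

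The main obstacle — really the only nonroutine point — is handling the gradient-of-cutoff cross terms in the maximum principle at each stage so as to keep the constants depending only on the listed quantities: one must be careful that differentiating $\phi_m^2|\wt\nabla^m g|^2$ produces terms like $\phi_m|\wt\nabla\phi_m|\,|\wt\nabla^m g||\wt\nabla^{m+1}g|$, which are not a priori absorbable unless one either (a) carries the extra power of $\phi_m$ (i.e. works with $\phi_m^{2k}$ for suitable $k$ and uses the negative $|\wt\nabla^{m+1}g|^2$ term via Cauchy–Schwarz), or (b) first establishes the lower-order bound on an intermediate domain and then feeds it in as a fixed constant. Both are standard; I would use the weighted-function approach of Shi's interior estimates essentially verbatim. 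Everything else — the precise form of the evolution equations for $\wt\nabla^i g$ under the $h$-flow, the commutation with $\Delta_g$, and the bookkeeping of $h$-curvature terms — is a routine (if lengthy) computation that I would cite to \cite{Shi1989, Simon2002} rather than reproduce.
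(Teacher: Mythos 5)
Your proposal is correct and takes essentially the same route as the paper: the paper's proof is a one-line citation to Shi's interior derivative estimates (Lemma 4.2 of \cite{Shi1989}, with the background metric taken to be $h$ rather than $g_0$) together with \cite{Simon2002}, and your cutoff/maximum-principle bootstrap is exactly that argument spelled out.
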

\begin{proof}
The proof is identical to that of \cite[Lemma 4.2]{Shi1989} except the background metric is chosen to be $h$ instead of the initial metric $g_0$. See also \cite{Simon2002}.
\end{proof}

\bigskip
\subsection{Regularizing $C^0$ metrics on compact manifolds}\label{subsection-compact-regularize}

Our main goal of this subsection is to prove the following.
\begin{prop}\label{l-regularization} Let $(M^n,g_0)$ be a compact Riemannian manifold with a $C^0$ metric $g_0$ which is smooth outside some compact subset $\Sigma$.  Then for any $\delta>0$,  there is a smooth metric $h$ such that the $h$-flow \eqref{equ:h-flow} has a solution $g(t)$ on $M\times(0,T]$ for some $T>0$ with the following properties:
\begin{enumerate}
  \item[(i)] $g(t)\to g_0$ in $C^0(M)$ and $g(t)\to g_0$ in $C^\infty_{loc}(M\setminus \Sigma)$ as $t\to 0$.
  \item[(ii)] $\frac 12 h\le g(t)\le 2 h$ in $M\times[0,T]$
  \item[(iii)]
  $$|\wn g(t)|^2_h+|\wn^2 g(t)|_h +|\Rm(g(t))|_{g(t)}\le \frac \delta t
  $$
  on $M\times(0,T]$. Here $\wn$ is the covariant derivative with respect to $h$.
\end{enumerate}

\end{prop}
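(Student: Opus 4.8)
The plan is to construct $h$ by mollifying $g_0$ and then to apply, in succession, Simon's existence theorem (Theorem~\ref{Simon-Theorem}), the Huang--Tam smallness estimate (Proposition~\ref{hFlow-improve}), and the interior regularity statement (Proposition~\ref{prop-local-spacetime}); the analytic content is entirely contained in these three inputs, so the work is mainly to arrange their hypotheses to hold simultaneously. First I would fix $\delta>0$, let $\sigma(n,\delta)>0$ be the constant of Proposition~\ref{hFlow-improve}, and --- after replacing $\e_n$ by a smaller dimensional constant if necessary --- assume $\e_n\le\tfrac12$ in Theorem~\ref{Simon-Theorem}. Covering the compact manifold $M$ by finitely many coordinate charts with a subordinate partition of unity and mollifying the coefficients of $g_0$ at a small scale $\rho>0$ produces a smooth metric $h=h_\rho$ with $\sup_M|h_\rho-g_0|\to0$ as $\rho\to0$ (this uses only that $g_0$ is continuous and positive definite and $M$ is compact). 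Choosing $\rho$ small makes $g_0$ be $\big(1+\min\{\e_n,\sigma(n,\delta)\}\big)$-close to $h$, and since $h$ is a smooth metric on a compact manifold the bounds \eqref{background-h-regular} hold for all $i$, in particular $|\Rm(h)|+|\wn\Rm(h)|+|\wn^2\Rm(h)|\le k_0$ for some $k_0>0$.

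Next, Theorem~\ref{Simon-Theorem} gives a smooth $h$-flow solution $g(t)$ on $M\times(0,T_0]$, with $T_0=T_0(n,k_0)>0$, satisfying $\sup_M|g(t)-g_0|\to0$ as $t\to0$ (part (i) with $\Omega=M$, legitimate since $M$ is compact without boundary) and $g(t)$ being $(1+2\e_n)$-close to $h$; as $\e_n\le\tfrac12$ this yields $\tfrac12 h\le g(t)\le 2h$, which is (ii), together with the $C^0(M)$ convergence in (i). Applying Proposition~\ref{hFlow-improve} with the same $\delta$ --- permissible because $g_0$ is $(1+\sigma(n,\delta))$-close to $h$ and $h$ has the required curvature bound $k_0$ --- produces $T_1=T_1(n,\delta,k_0)>0$ with $|\wn g(t)|^2+|\wn^2 g(t)|+|\Rm_{g(t)}|\le \delta/t$ on $M\times(0,T_1\wedge T_0]$; setting $T=T_1\wedge T_0>0$ gives (iii).

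It remains to upgrade the convergence off $\Sigma$. Since $g_0$ is smooth on the open set $M\setminus\Sigma$, for any $\Omega'\Subset\Omega\Subset M\setminus\Sigma$ Proposition~\ref{prop-local-spacetime} bounds $\sup_{\Omega'\times[0,T]}|\wn^i g(t)|$ for every $i$, \emph{uniformly up to} $t=0$; standard bootstrapping using the $h$-flow equation \eqref{equ:h-flow} then bounds all space-time derivatives of $g$ on $\Omega'\times[0,T]$. By Arzel\`a--Ascoli, every sequence $t_k\to0$ has a subsequence along which $g(t_k)$ converges in $C^\infty_{loc}(M\setminus\Sigma)$, and the $C^0$ convergence already established forces the limit to be $g_0$; subsequence-independence of the limit gives $g(t)\to g_0$ in $C^\infty_{loc}(M\setminus\Sigma)$ as $t\to0$, completing (i).

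The only slightly delicate bookkeeping is the compatibility of the inputs: the single smooth metric $h$ must be close enough to $g_0$ for both Theorem~\ref{Simon-Theorem} and Proposition~\ref{hFlow-improve} to apply, while Simon's theorem only confines the flow within $(1+2\e_n)h$, which is why one normalizes $\e_n\le\tfrac12$ at the outset. The one genuinely non-formal point is the passage from $C^0$ convergence on all of $M$ to $C^\infty_{loc}$ convergence on $M\setminus\Sigma$; this rests on Proposition~\ref{prop-local-spacetime} furnishing estimates valid uniformly up to the initial time rather than merely on $(0,T]$.
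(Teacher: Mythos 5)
Your proof is correct and rests on exactly the same three inputs the paper uses (Theorem~\ref{Simon-Theorem}, Proposition~\ref{hFlow-improve}, Proposition~\ref{prop-local-spacetime}), but the organization differs in two ways. First, you build $h$ by mollifying $g_0$ and appeal to Proposition~\ref{prop-local-spacetime} for the $C^0$ flow started directly from $g_0$; the paper instead invokes Lemma~\ref{smooth-approximation} to produce smooth metrics $g_{i,0}$ that coincide with $g_0$ outside the shrinking neighbourhoods $\Sigma(i^{-1})$, sets $h=g_{i_0,0}$, runs the $h$-flow from each smooth $g_{i,0}$, and then extracts $g(t)$ as the limit of the flows $g_i(t)$. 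Both routes are legitimate for this proposition: Proposition~\ref{prop-local-spacetime} is stated under the hypotheses of Theorem~\ref{Simon-Theorem}, so it indeed applies to the $C^0$ flow, and the mollification argument you give does furnish a smooth $h$ arbitrarily $C^0$-close to $g_0$ on compact $M$. What the paper's choice of approximation buys is reuse: in the asymptotically flat setting of Proposition~\ref{AF-hflow} the fact that $g_{i,0}=g_0$ outside a compact set is essential to keep the approximants uniformly AF and to control the ADM mass, and by constructing $g(t)$ as a limit of flows from smooth initial data they can work with classical Shi estimates rather than the $C^0$ variant. Your version is a bit more direct for the compact case; had you needed it for Section~\ref{Sec:AF} as well you would essentially be forced into the paper's scheme.

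One small point worth stating explicitly in your write-up: the paper's phrasing of Theorem~\ref{Simon-Theorem}(iii) gives $(1+2\e_n)$-closeness to $h$, so to read off part (ii) of the proposition you need $1+2\e_n\le 2$, i.e. $\e_n\le\tfrac12$; you noted this, but it is exactly the kind of normalization that should be recorded rather than assumed.
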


Let $(M^n,g_0)$ be a  complete Riemannian manifold without boundary and let $\Sigma$ be a compact set of $M^n$. Assume $g_0$ is in $C^0(M)$ and $g\in C^\infty_{loc}(M\setminus \Sigma)$. For any $a>0$, denote
\be\label{e-sigma-a}
\Sigma(a):=\{x\in M|\ \ d_{g_0}(x,\Sigma)<a\}
\ee
where $d_{g_0}$ is the distance function induced by $g_0$.

We start with an approximation of $g_0$.
\begin{lma}\label{smooth-approximation}
For a continuous metric $g_0$, there is a sequence of smooth metrics $g_{i,0}$ on $M$ such that $g_{i,0}=g_0$ outside $\Sigma(i^{-1})$ and $g_{i,0}$ converges to $g_0$ in $C^0$ topology.
\end{lma}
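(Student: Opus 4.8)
The plan is to construct $g_{i,0}$ by a cut-and-paste procedure: in a shrinking neighbourhood of $\Sigma$ we replace $g_0$ by a smooth metric obtained by mollifying $g_0$, and away from $\Sigma$ we keep $g_0$ unchanged, interpolating between the two on a thin shell around $\Sigma$. The reason the interpolation causes no trouble is that $g_0$ is already smooth on $M\setminus\Sigma$, so on the shell --- which is disjoint from $\Sigma$ --- a smoothly-weighted convex combination of the two (locally smooth) tensors is again smooth.

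Concretely, I would first fix a smooth background metric $h$ on $M$ and a relatively compact open neighbourhood $U$ of the compact set $\Sigma$; on $\overline U$ the metric $g_0$ is uniformly continuous and satisfies $\Lambda^{-1}h\le g_0\le\Lambda h$ for some $\Lambda>1$. To produce the smooth approximant near $\Sigma$, cover $\Sigma$ by finitely many coordinate charts $\{(V_\alpha,\phi_\alpha)\}$ with $\bigcup_\alpha V_\alpha\Subset U$, choose $\rho_\alpha\in C_c^\infty(V_\alpha)$ with $0\le\rho_\alpha\le1$ and $\sum_\alpha\rho_\alpha\equiv1$ on an open set $W$ with $\Sigma\subset W\Subset\bigcup_\alpha V_\alpha$, mollify the coordinate components of $g_0$ in each chart at scale $\e$ (defined near $\mathrm{supp}\,\rho_\alpha$ once $\e$ is small), and set $\tilde g_\e:=\sum_\alpha\rho_\alpha\,g_0^{(\alpha),\e}$, a smooth symmetric $2$-tensor near $\overline W$. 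Since $\sum_\alpha\rho_\alpha\equiv1$ on $W$ one has $\tilde g_\e-g_0=\sum_\alpha\rho_\alpha(g_0^{(\alpha),\e}-g_0)$ there, and uniform continuity of $g_0$ on $\overline U$ then gives $\|\tilde g_\e-g_0\|_{C^0(W,h)}\to0$ as $\e\to0$; in particular $\tilde g_\e\ge\tfrac12\Lambda^{-1}h$ on $W$ for all small $\e$, so $\tilde g_\e$ is a genuine metric there.

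Next I would pick, for each sufficiently large $i$, a cut-off $\chi_i\in C^\infty(M)$ with $0\le\chi_i\le1$, $\chi_i\equiv1$ on a neighbourhood of $\Sigma$ and $\mathrm{supp}\,\chi_i\subset\Sigma(i^{-1})$; such a $\chi_i$ exists because $\overline{\Sigma((3i)^{-1})}\subset\Sigma(i^{-1})$, and since $\Sigma$ is compact one has $\Sigma(i^{-1})\subset W$ once $i$ is large. Choosing $\e_i\downarrow0$ with $\|\tilde g_{\e_i}-g_0\|_{C^0(W,h)}<1/i$, I would set $g_{i,0}:=\chi_i\,\tilde g_{\e_i}+(1-\chi_i)\,g_0$ (and take $g_{i,0}$ to be any fixed smooth metric for the finitely many excluded small $i$). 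Then the four required properties follow: $g_{i,0}=g_0$ off $\Sigma(i^{-1})$ because $\chi_i$ vanishes there; $g_{i,0}$ is positive definite since pointwise it is either $g_0$ or a convex combination of the positive definite forms $g_0$ and $\tilde g_{\e_i}$; $g_{i,0}$ is smooth by a routine case analysis (near points of $\Sigma$ it coincides with $\tilde g_{\e_i}$, which is smooth; on $M\setminus\Sigma$ it is a smoothly-weighted combination of the smooth tensors $g_0$ and $\tilde g_{\e_i}$ on $\mathrm{supp}\,\chi_i$, and equals $g_0$ off $\mathrm{supp}\,\chi_i$); and $\|g_{i,0}-g_0\|_{C^0(M,h)}=\|\chi_i(\tilde g_{\e_i}-g_0)\|_{C^0}\le\|\tilde g_{\e_i}-g_0\|_{C^0(W,h)}<1/i\to0$.

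The whole argument is standard smoothing and I expect no real obstacle; the two points deserving a little care are the verification that the pasted tensor is genuinely $C^\infty$ across the transition shell --- which works precisely because that shell avoids $\Sigma$, where $g_0$ is already smooth, while $\chi_i\equiv1$ forces $g_{i,0}=\tilde g_{\e_i}$ near $\Sigma$ --- and the elementary point-set fact that the shrinking tubes $\Sigma(i^{-1})$ eventually lie inside the fixed neighbourhood $W$.
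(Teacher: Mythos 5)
Your proposal is correct and follows essentially the same route as the paper, which refers this lemma to the cut-and-paste mollification construction in Shi--Tam (Lemma 4.1 of \cite{ShiTam2016}): mollify $g_0$ in coordinate charts covering $\Sigma$, patch with a partition of unity, and glue to $g_0$ via a cutoff supported in $\Sigma(i^{-1})$ that is $\equiv 1$ near $\Sigma$, using smoothness of $g_0$ off $\Sigma$ to ensure the transition shell causes no regularity loss. The only cosmetic point is that for the finitely many small $i$ where $\Sigma(i^{-1})$ is not contained in $W$ one should simply reindex (or note the constraint is vacuous when $\Sigma(i^{-1})=M$) rather than assign an arbitrary smooth metric that may violate $g_{i,0}=g_0$ off $\Sigma(i^{-1})$.
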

\begin{proof}
The proof is identical to that \cite[Lemma 4.1]{ShiTam2016} except we don't have the additional uniform $W^{1,p}$ structure.
\end{proof}
\bigskip

\begin{proof}[Proof of Proposition \ref{l-regularization}]
Let $g_{i,0}$ be as in the lemma. Given $\delta>0$, there is $i_0$ such that for $i\ge i_0$, then $g_{0,i}$ is $1+ \sigma(n,\delta)$ close to $g_{0,i_0}$ where $ \sigma(n,\delta)$  is the constant obtained from Proposition \ref{hFlow-improve}. We also assume that $\sigma<\e_n$ where $\e_n$ is in the constant in Theorem \ref{Simon-Theorem}. Denote $g_{0,i_0}$ be $h$. Then $h$ is smooth and $\sum_{k=0}^2|\wn^k \Rm(h)|\le k_0$ for some $k_0>0$. By Theorem \ref{Simon-Theorem}, Proposition \ref{hFlow-improve}, and \ref{prop-local-spacetime}, for each $i\ge i_0$ there is a solution $g_i(t)$ to the $h$-flow on $M\times(0,T]$ for some $T>0$ independent of $i$. Moreover
\begin{equation}
\left\{
\begin{array}{ll}
\displaystyle\quad \quad  \quad  \quad \frac12 h\le g_i(t)\le 2h;\\
|\wt \nabla g_i(t)|^2+|\wt\nabla^2 g_i(t)|+|\Rm_{g_i(t)}|\leq \delta t^{-1}
\end{array}
\right.
\end{equation}
%
%
%
%
on $M\times(0,T]$, and if $\Omega\Subset M\setminus \Sigma$, then for $i$ large enough
 we have
$$\sup_{\Omega\times [0,T]}|\wt \nabla^k g_i(t)|\leq C_k$$
for some $C_k>0$ depending only on $n,k, \Omega, g_0$ because $g_{i,0}=g_0$ outside $\Sigma(\frac1i)$. Hence by taking a subsequence, $g_i(t)$ will converge to a solution to the $h$-flow on $M\times(0,T]$ so that
$$|\wt \nabla g (t)|^2+|\wt\nabla^2 g (t)|+|\Rm_{g (t)}|\leq \frac{\delta }{t}$$
and $g(t)$ is smooth up to $t=0$ outside $\Sigma$.  Moreover by the proof of \cite[Theorem 5.2]{Simon2002},
\begin{equation}\label{limiting-hflow-t=0}
\lim_{t\to 0}||g(t)- g_0||_\infty=0.
\end{equation}
This completes the proof.
\end{proof}

\section{A monotonicity formula and a local maximum principle}\label{ss-MP}

We need a local maximum principle from \cite{LeeTam2020}. We only state a weaker form which is sufficient for our purpose.
\begin{prop}\label{t-MP}
Let $h$ be a smooth metric so that
$$
|\Rm(h)|\le k_0,
$$
where $\wn$ is the covariant derivative of the Riemannian connection with respect to $h$.
Suppose $(M,g(t)),t\in [0,S]$ is a smooth solution to the $h$-flow such that $\frac12h\leq g(t)\leq 2 h$ and
$$|\wn g(t)|^2+|\wn^2 g(t)|\leq \frac{\a }{t}$$
on $M\times (0,S]$ for some $\a>1$.

Suppose $\varphi$ is a smooth function on $M\times [0,S]$ such that $\varphi(0)\leq 0$ on $B_{g_0}(x_0,r)$, $\varphi\leq \a t^{-1}$ and
\begin{equation}
\left(\frac{\partial}{\partial t}-\Delta_{g(t)}\right)\varphi \leq \langle W,\nabla \varphi\rangle +L\varphi
\end{equation}
 for some non-negative continuous function $L$ on $M\times [0,S]$ with $L\leq \a t^{-1}$, where $W$ is the vector field as in \eqref{equ:h-flow}. Then for any $l>\a+1$, there exist $S\ge S_1(n,\a,k_0)>0$ and $ T_1(n,\a, k_0,l)>0$ such that for all $t\in [0,S_1\wedge (r^2 T_1)]$,
$$\varphi(x_0,t)\leq 4^{l+1} t^l r^{-2(l+1)}.$$
\end{prop}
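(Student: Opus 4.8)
The plan is to run a localized maximum principle along the $h$-flow, in the spirit of \cite{LeeTam2020}. Fix $l>\a+1$ and set $N=l+1$. First one builds a cutoff $\phi\in C^\infty(M)$, $0\le\phi\le1$, with $\phi\equiv1$ on $B_{g_0}(x_0,r/2)$, $\operatorname{supp}\phi\subset B_{g_0}(x_0,r)$, and (using $|\Rm(h)|\le k_0$ together with the Hessian comparison theorem) $|\wn\phi|_h\le Cr^{-1}$, $|\wn^2\phi|_h\le Cr^{-2}$, $C=C(n,k_0)$. Along the flow $W=g^{-1}\ast\wn g$ with $|\wn g|^2\le\a/t$ and $\tfrac12h\le g(t)\le2h$ give $|W|_{g(t)}\le c(n)\sqrt{\a/t}$; since $\Delta_{g(t)}\phi=\tr_{g(t)}\wn^2\phi-\langle W,\nabla\phi\rangle$, also $|\nabla\phi|_{g(t)}\le Cr^{-1}$ and $|\Delta_{g(t)}\phi|\le C(r^{-2}+\sqrt{\a/t}\,r^{-1})$ on $\operatorname{supp}\phi\times(0,S]$. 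The test function is
\[
F(x,t)=\phi(x)^{2(l+1)}\varphi(x,t)-\beta(t),\qquad \beta(t)=\Lambda\,t^l r^{-2(l+1)},\qquad (x,t)\in M\times[0,\tau],
\]
with $\Lambda=\Lambda(n,\a,k_0,l)>1$ and $\tau=S_1\wedge(r^2T_1)\le1$ to be chosen; since $\varphi(\cdot,0)\le0$ on $\operatorname{supp}\phi$ we have $F(\cdot,0)\le0$.

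Let $\Phi(t)=\sup_MF(\cdot,t)$; this is finite (as $\overline{B_{g_0}(x_0,r)}$ is compact), continuous on $[0,\tau]$, locally Lipschitz on $(0,\tau]$, with $\Phi(0)\le0$. By Hamilton's trick, for a.e.\ $t$ with $\Phi(t)>0$ one has $\Phi'(t)\le\partial_t(\phi^{2N}\varphi)(x_1,t)-\beta'(t)$, where $x_1=x_1(t)$ realizes $\sup_M(\phi^{2N}\varphi)(\cdot,t)=\Phi(t)+\beta(t)>0$, so $\phi(x_1)>0$ and $\varphi(x_1,t)>0$. At $x_1$ we have $\nabla(\phi^{2N}\varphi)=0$ and $\Delta_{g(t)}(\phi^{2N}\varphi)\le0$; inserting these and $(\p_t-\Delta_{g(t)})\varphi\le\langle W,\nabla\varphi\rangle+L\varphi$ and doing the standard Bochner-type rearrangement of the $\Delta\varphi$, gradient and $W$ terms, one gets
\[
\Phi'(t)\le \varphi(x_1,t)\Big(\tfrac{2|\nabla(\phi^{2N})|^2}{\phi^{2N}}+|\Delta_{g(t)}(\phi^{2N})|+|W||\nabla(\phi^{2N})|\Big)+L\big(\Phi(t)+\beta(t)\big)-\beta'(t).
\]
Each cutoff term in the bracket is $\le \phi^{2l}\cdot C(l,n,k_0)(r^{-2}+\sqrt{\a/t}\,r^{-1})$, and $\phi^{2l}\varphi(x_1,t)=(\Phi(t)+\beta(t))\phi(x_1)^{-2}$. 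The a priori bound $\varphi\le\a/t$ with $\phi^{2N}\varphi(x_1,t)\ge\beta(t)$ forces $\phi(x_1)^{2N}\ge \Lambda\a^{-1}r^{-2N}t^{N}$, i.e. $\phi(x_1)^2\ge(\Lambda/\a)^{1/(l+1)}r^{-2}t$ — here it is crucial that the power of $t$ in $\beta(t)\cdot t/\a$ is exactly $N=l+1$ — hence $\phi(x_1)^{-2}\le(\a/\Lambda)^{1/(l+1)}r^2 t^{-1}$. Using also $L\le\a/t$ and $\sqrt{\a t}\le1$ on $[0,\tau]$, these estimates collapse to
\[
\Phi'(t)\le \tfrac{\kappa}{t}\,\Phi(t)-\tfrac{l-\kappa}{t}\,\beta(t),\qquad \kappa:=\a+C(l,n,k_0)(\a/\Lambda)^{1/(l+1)} ,
\]
for a.e.\ $t\in(0,\tau]$ with $\Phi(t)>0$.

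Now choose $\Lambda$ so large (depending on $n,\a,k_0,l$) that $\kappa<l$; this is possible because $l>\a+1>\a$. For such $\kappa$ the function $\bar\Phi(t)=-\beta(t)$ solves the associated ODE with $\bar\Phi(0)=0$, and one concludes $\Phi\le0$ on $[0,\tau]$ by a comparison argument: if $\{\Phi>0\}\ne\emptyset$, on the connected component $(a,b)$ of a point of it one has $\Phi(a)=0$ (from $\Phi(0)\le0$; the case $a=0$ uses that $\varphi$ is smooth up to $t=0$ with $\varphi(\cdot,0)\le0$ and $\p_t\varphi\le0$ at the interior maxima of $\varphi(\cdot,0)|_{\operatorname{supp}\phi}$, which makes $\Phi(t)$ vanish fast enough as $t\to0$ — again using $l>\a+1$), and then $\bigl(t^{-\kappa}\Phi\bigr)'\le0$ a.e.\ on $(a,b)$ forces $\Phi\le0$ there, a contradiction. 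Evaluating $\Phi\le0$ at $x=x_0$ (where $\phi=1$) gives $\varphi(x_0,t)\le\Lambda t^l r^{-2(l+1)}$ for $t\in[0,S_1\wedge(r^2T_1)]$, where $S_1=S_1(n,\a,k_0)$ (from $\tau\le1$ and the flow bounds) and $T_1=T_1(n,\a,k_0,l)$ (from the choice of $\Lambda$ and $\sqrt{\a t}\le1$); the explicit value $\Lambda=4^{l+1}$ comes out of a more careful bookkeeping, e.g.\ taking $\phi=\psi\bigl(d_h(\cdot,x_0)/r\bigr)$ for a fixed profile $\psi$ and tracking the $l$-dependence.

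The main obstacle is the simultaneous absorption, against the dissipation rate $\beta'/\beta=l/t$ of the barrier, of three competing error terms in the maximum principle: the zeroth order term $L\varphi$ (rate $\a/t$), the cutoff–gradient term ($\sim r^{-2}$), and the DeTurck term ($\sim\sqrt{\a/t}\,r^{-1}$). The zeroth order term is the delicate one: since $L$ is only controlled by $\a/t$, its rate $\a/t$ must be strictly beaten by $l/t$, and the gap must be wide enough ($l>\a+1$, not merely $l>\a$) to also swallow the cutoff and DeTurck contributions once these have been tamed by the lower bound $\phi(x_1)^2\gtrsim t/r^2$. The second delicate point, the behaviour of $\Phi$ as $t\to0$, is handled using the smoothness of $\varphi$ up to $t=0$ in the present setting.
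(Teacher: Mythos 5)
Your route is genuinely different from the paper's. The paper does not prove the local maximum principle from scratch: it pulls $\varphi$ back to the associated Ricci flow via the DeTurck diffeomorphism $\Phi_t$ of \eqref{e-Phi} (which eliminates the $\langle W,\nabla\varphi\rangle$ term), applies \cite[Corollary 3.1]{LeeTam2020} as a black box on balls $B_{g_0}(x,r/2)$, passes to a parabolic ball via \cite[Corollary 3.3]{SimonTopping2016}, and finally estimates $d_{g(t)}(\Phi_t(x_0),x_0)\le 2\a^2\sqrt t\le r/4$ using $|W|\le\a t^{-1/2}$ to bring the bound back to $x_0$. You are instead attempting to reprove the cited ingredient directly in the $h$-flow gauge by a cutoff–barrier argument, which would be a cleaner, self-contained alternative if it closed. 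It does not quite close, for the following reasons.

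A bookkeeping slip first: the DeTurck term should be tamed with \emph{one} inverse power of $\phi$, not two. Writing $|\nabla(\phi^{2N})|\le 2N\phi^{2N-1}|\nabla\phi|$ and $\varphi(x_1)\phi^{2N-1}(x_1)=(\Phi+\beta)\phi(x_1)^{-1}\le(\Phi+\beta)(\a/\Lambda)^{1/(2N)}r\,t^{-1/2}$, the factors of $r$ cancel against $|W||\nabla\phi|\le C\sqrt{\a/t}\,r^{-1}$ and one gets $\lesssim(\a/\Lambda)^{1/(2N)}(\Phi+\beta)/t$, as desired. As you wrote it, taming both cutoff terms with $\phi(x_1)^{-2}\lesssim r^2/t$ leaves an $r/\sqrt t$ factor in the DeTurck contribution (and $r^2/t\ge T_1^{-1}$ on your interval, so this does not become small); the displayed estimates therefore do not ``collapse'' to the claimed ODE as stated.

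The more serious gap is at $t\to0$. When the component $(a,b)\subset\{\Phi>0\}$ has $a=0$, integrating $(t^{-\kappa}\Phi)'\le -(l-\kappa)t^{-\kappa-1}\beta$ from $\e$ to $t_*$ requires $\liminf_{\e\to0}\e^{-\kappa}\Phi(\e)\le0$. Since $\kappa>\a>1$ and all that is unconditionally known is that $\varphi$ is smooth with $\Phi(0)\le0$ (hence $\Phi(\e)=O(\e)$), this $\liminf$ may well be $+\infty$. Your proposed remedy---that $\p_t\varphi\le0$ at interior maxima of $\varphi(\cdot,0)$, forcing $\Phi$ to vanish ``fast enough''---is not substantiated: the differential inequality is only assumed on $(0,S]$ and its coefficients $W,L$ are allowed to blow up like $t^{-1/2}$ and $t^{-1}$ as $t\to0$, so it carries no sign information about $\partial_t\varphi(\cdot,0)$; and even if one had $\partial_t\varphi\le0$ there, it would only give $\Phi(\e)=O(\e)$, not the $o(\e^\kappa)$ that the comparison needs. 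This passage from the $O(1)$ Lipschitz bound at $t=0^+$ up to the $t^l$ decay is exactly the nontrivial content of \cite[Corollary 3.1]{LeeTam2020}; closing it requires an additional mechanism (e.g.\ a self-improving iteration starting from $\varphi\le\a t^{-1}$, which is presumably also what produces the explicit constant $4^{l+1}$) beyond the single-barrier ODE comparison you outline.
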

\begin{proof}
By the discussion in Section~\ref{Sec: RF}, $\wh g(t)=\Phi^*_t g(t),t\in [0,S]$ is a smooth solution to the Ricci flow with $\wh g(0)=g(0)=g_0$, where $\Phi_t$ is as in \eqref{e-Phi}. Moreover, $\wh\varphi(t)=\varphi\left( \Phi_t(x),t\right),\wh L(x,t)=L\left( \Phi_t(x),t\right)$ satisfy
\begin{equation}
\left(\frac{\partial}{\partial t}-\Delta_{\wh g(t)}\right)\wh\varphi \leq \wh L \wh\varphi.
\end{equation}
with
$$
\wh\varphi(t)\leq \a t^{-1},\quad\text{and}\quad
\wh L(t)\leq \a t^{-1}.
$$
On the other hand, one can check that there is $c_1(n)>0$ such that
$$
|\Rm(g(t))|\le c_1\lf(|\Rm(h)|_h+|\wn g(t)|^2_h+|\wn^2g(t)|_h\ri).
$$
Hence there is $0<S_1<S$ with $S_1=S_1(n, k_0,\a)$ so that
\be
|\Rm(\wh g(t))|=|\Rm(g(t))|\le \frac{2c_1\a}t
\ee
for $t\in (0,S_1]$.
Moreover, we still have $\wh\varphi(0)\leq 0$ on $B_{g_0}(x_0,r)$. By applying \cite[Corollary 3.1]{LeeTam2020} on $B_{g_0}(x,r/2)$ where $x\in B_{g_0}(x_0,r/2)$, we deduce that for any $l>\a+1$, we can find $T_1(n,\a,l)>0$ such that for all  $(x,t)\in B_{g_0}(x_0,r/2)\times [0,S_1\wedge (T_1 r^2)]$,
\begin{equation}
\wh \varphi(x,t)\leq 4^{l+1}t^l r^{-2(l+1)}.
\end{equation}

Moreover by \cite[Corollary 3.3]{SimonTopping2016}, we may shrink $T_1$ further so that
\begin{equation}\label{LMP-RF-lowerbound}
\wh \varphi(x,t)\leq 4^{l+1}t^l r^{-2(l+1)}
\end{equation}
for all $x\in B_{\wh g(t)}(x_0,r/4), t\in [0,S\wedge (T_2 r^2)]$.
\bigskip

Recall that $\partial_t \Phi_t=-W$ with $|W|_h\leq \a t^{-1/2}$,
\begin{equation}
\begin{split}
d_{g(t)}\left(\Phi_t(x_0),x_0 \right)&\leq \a\cdot d_h\left(\Phi_t(x_0),x_0 \right)\\
&\leq  2\a^2\sqrt{t}\\
&\leq \frac{r}{4}.
\end{split}
\end{equation}
provided that $T_2\leq (8\a^2)^{-2}$. Since $\wh g(t)$ is isometric to $g(t)$ through $\Phi_t$, $$x_0\in B_{g(t)}(\Phi_t(x_0),r/4)=\Phi_t\left( B_{\wh g(t)}(x_0,r/4) \right).$$

By \eqref{LMP-RF-lowerbound}, this completes the proof.
\end{proof}

\bigskip

We also need the monotonicity of scalar curvature along the Ricci flow and the Ricci-Deturck flow.
\begin{lma}\label{l-R-monotone-new}
Suppose $(M,g(t)),t\in [0,S]$ is a smooth solution to the Ricci  flow such that
\begin{enumerate}
\item $\sup_{M\times [\tau,T]} |\Rm|<+\infty$ for $\tau\in (0,T]$;
\item $ \cR(g(t))\geq -at^{-1}$ for some $a>0$;
\item there exists $x_0\in M$ and $\Lambda,k>0$ such that $\mathrm{Vol}_{g(t)}(B_{g(t)}(x_0,r)) \leq \Lambda r^k$ for all $r>0$.
\end{enumerate}
Let $\sigma(t)=\sigma_0(1-\frac2n\sigma_0t)^{-1}$ where $\sigma_0\le0$ is a constant, then for all $0< t\leq s\leq S$, if $\varphi\in L^1(M,g(t))$, we have
$$ \left( \int_M \varphi(s) \; d\mu_{ g(s)} \right)\leq \left(\frac{s}{t} \right)^a \left( \int_M \varphi(t) \; d\mu_{ g(t)} \right)$$
where $\varphi(x,t)=\left(\cR_{g(t)}(x) -\sigma(t)\right)_-$.
\end{lma}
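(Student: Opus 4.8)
The plan is to carry out the classical parabolic maximum-principle argument for scalar curvature along the Ricci flow, but in an integrated (rather than pointwise) form, so that no control of $\cR_{g(t)}$ as $t\to 0$ is needed. Along the Ricci flow one has $\p_t\cR=\Delta\cR+2|\Ric|^2$ and $|\Ric|^2\ge\tfrac1n\cR^2$, hence $\heat\cR\ge\tfrac2n\cR^2$. The function $\sigma(t)=\sigma_0(1-\tfrac2n\sigma_0t)^{-1}$ is precisely the solution of $\dot\sigma=\tfrac2n\sigma^2$ with $\sigma(0)=\sigma_0$, and since $\sigma_0\le 0$ we have $\sigma(t)\in[\sigma_0,0]$ for $t\ge 0$. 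Setting $u=\cR_{g(t)}-\sigma(t)$ and subtracting the ODE gives
\[
\heat u\ \ge\ \tfrac2n\big(\cR^2-\sigma^2\big)\ =\ \tfrac2n\,u\,(u+2\sigma).
\]

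Next I would pass to the negative part $\varphi=u_-=\max(-u,0)$, which is exactly the quantity $\varphi(x,t)=(\cR_{g(t)}(x)-\sigma(t))_-$ in the statement. For each fixed $t>0$ the metric $g(t)$ is smooth, so $u(\cdot,t)$ is smooth and Kato's inequality gives $\Delta\varphi\ge-\mathbf{1}_{\{u<0\}}\Delta u$ weakly, while $\p_t\varphi=-\mathbf{1}_{\{u<0\}}\p_t u$ a.e. On $\{u<0\}$ one has $u<0$ and, because $\sigma\le 0$, also $u+2\sigma<0$; combining this with the differential inequality above yields, in the distributional sense,
\[
\heat\varphi\ \le\ -\mathbf{1}_{\{u<0\}}\heat u\ \le\ -\mathbf{1}_{\{u<0\}}\tfrac2n\,u\,(u+2\sigma)\ \le\ 0 .
\]

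I would then integrate over $M$. Since $\p_t\,d\mu_{g(t)}=-\cR\,d\mu_{g(t)}$ under the Ricci flow, and using $\heat\varphi\le 0$ together with $\int_M\Delta\varphi\,d\mu_{g(t)}=0$,
\[
\frac{d}{dt}\int_M\varphi\,d\mu_{g(t)}\ =\ \int_M\p_t\varphi\,d\mu_{g(t)}-\int_M\varphi\,\cR\,d\mu_{g(t)}\ \le\ \int_M\varphi\,(-\cR)\,d\mu_{g(t)} .
\]
On $\operatorname{supp}\varphi=\{\cR<\sigma(t)\le 0\}$ hypothesis (2) gives $0\le-\cR\le at^{-1}$, so the right-hand side is $\le\tfrac{a}{t}\int_M\varphi\,d\mu_{g(t)}$. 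Hence $\tfrac{d}{dt}\int_M\varphi\,d\mu_{g(t)}\le\tfrac{a}{t}\int_M\varphi\,d\mu_{g(t)}$, and integrating this Gronwall inequality from $t$ to $s$ (the integral being finite at time $t$ by hypothesis) gives $\int_M\varphi(s)\,d\mu_{g(s)}\le(s/t)^a\int_M\varphi(t)\,d\mu_{g(t)}$, as desired; if $\int_M\varphi(t)\,d\mu_{g(t)}=0$ the same inequality forces $\varphi(s)\equiv 0$, recovering the familiar fact that $\cR\ge\sigma(t)$ is preserved.

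The one genuinely delicate point is justifying $\int_M\Delta\varphi\,d\mu_{g(t)}=0$ — equivalently, the vanishing of the error terms when a cutoff $\chi_R$ with $\chi_R\equiv 1$ on $B_{g(t)}(x_0,R)$, $\operatorname{supp}\chi_R\subset B_{g(t)}(x_0,2R)$, $|\nabla\chi_R|\le C/R$, $|\Delta\chi_R|\le C/R^2$ is inserted — on a possibly non-compact $M$; this is exactly where hypotheses (1) and (3) enter. The interior bound (1), via Shi's local derivative estimates, controls $|\nabla\cR_{g(t)}|$ and $|\p_t\cR_{g(t)}|$ (hence $|\nabla\varphi|$, $|\p_t\varphi|$) for each fixed $t>0$ and makes the metrics $g(t)$, $t\in[\tau,S]$, mutually uniformly equivalent, so that differentiating $\int_M\chi_R\varphi\,d\mu_{g(t)}$ in $t$ is legitimate; the polynomial volume bound (3), together with the $L^1$-hypothesis on $\varphi$ and the crude a priori bound $\varphi\le at^{-1}$, then forces $\int_M\big(\varphi|\Delta\chi_R|+|\nabla\varphi|\,|\nabla\chi_R|\big)\,d\mu_{g(t)}\to 0$ as $R\to\infty$, after which monotone convergence recovers the clean inequality and the finiteness of $\int_M\varphi(s)\,d\mu_{g(s)}$. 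When $M$ is compact this step is vacuous. I expect this cutoff estimate, and the bookkeeping needed to propagate finiteness of $\int_M\varphi\,d\mu_{g(t)}$ in time, to be the main work; everything else is the textbook scalar-curvature computation.
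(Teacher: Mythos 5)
Your overall strategy is the one the paper uses: derive a parabolic inequality for the negative part $\varphi=(\cR_{g(t)}-\sigma(t))_-$ along the Ricci flow, integrate against a cutoff, and close with Gronwall using hypotheses (2) and (3). The only cosmetic difference is that you invoke Kato's inequality to treat the non-smooth negative part directly, whereas the paper works with the smooth family $v_\theta=\tfrac12\bigl(((\cR-\sigma)^2+\theta)^{1/2}-(\cR-\sigma)\bigr)$ and lets $\theta\to 0$; this is a standard technical variant and both are fine. Your derivation of $\heat\varphi\le 0$ and the subsequent Gronwall step matches the paper's.

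The step you flag as ``the main work'' — vanishing of the cutoff error — is asserted rather than justified, and as written it does not close. With a standard cutoff $\chi_R$ satisfying $|\nabla\chi_R|\lesssim R^{-1}$, $|\Delta\chi_R|\lesssim R^{-2}$, the only a priori control at intermediate times $\tau\in(t,s)$ is $\varphi(\tau)\lesssim\tau^{-1}$ (the $L^1$ hypothesis is given only at the single time $t$ — propagating it is exactly what the lemma must prove). Combined with the volume bound this gives
\[
\int_M\varphi\,|\Delta\chi_R|\,d\mu\ \lesssim\ R^{-2}\cdot\tau^{-1}\cdot\Lambda R^{k}\ =\ \Lambda\tau^{-1}R^{k-2},
\]
which diverges as $R\to\infty$ whenever $k>2$; a bound on $|\nabla\varphi|$ from Shi's estimates does not help either, since $\int|\nabla\varphi|\,|\nabla\chi_R|\,d\mu\lesssim R^{k-1}$. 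The paper's device, which your sketch is missing, is to take the cutoff to be a high power $\Phi=\phi^m(\rho/R)$ of a distance-like exhaustion $\rho$, exploiting the structural inequality $|\Delta\Phi|\lesssim R^{-1}\Phi^{1-1/m}$, and then apply H\"older:
\[
\int_M v\,\Phi^{1-1/m}\,d\mu\ \le\ \Bigl(\int_M v\,\Phi\,d\mu\Bigr)^{1-1/m}\Bigl(\int_{\mathrm{supp}\,\Phi}v\,d\mu\Bigr)^{1/m}\ \lesssim\ R^{k/m}\Bigl(\int_M v\,\Phi\,d\mu\Bigr)^{1-1/m}.
\]
Choosing $m=2k$ makes the net prefactor $R^{k/m-1}=R^{-1/2}\to 0$, and the sub-linear power $(\int v\Phi)^{1-1/(2k)}$ lets one run an ODE comparison seeded by the finite $L^1$-norm at time $t$ to propagate finiteness forward and kill the error in the limit $R\to\infty$. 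Without this $\phi^m$/H\"older mechanism the exhaustion argument fails for $k>2$, so this is a genuine gap in your proposal; the rest is correct and matches the paper.
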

\begin{proof}
For any $\theta>0$, let
$$
 v(x,t)=\frac12 \left(\lf(\lf(\cR_{g(t)}-\sigma(t)\ri)^2+\theta\ri)^\frac12-(\cR_{g(t)}-\sigma(t))\right).
$$

We compute
  \bee
\begin{split}
\lf(\frac{\p}{\p t}-\Delta_{g(t)}\ri)v
=&\frac{-v}
{\lf(\cR_{g(t)}-\sigma(t))^2+\theta\ri)^\frac12}\lf(\frac{\p}{\p t}(\cR_{g(t)}-\sigma(t))-\Delta_{g(t)} \cR_{g(t)}\ri)\\
& -\frac{\theta|\nabla \cR_{g(t)}|^2}{2\lf(( \cR_{g(t)}-\sigma(t))^2+\theta\ri)^\frac32}\\
\le &\frac{ v}
{\lf(\cR_{g(t)}-\sigma(t))^2+\theta\ri)^\frac12}\cdot \frac2n\lf(   -\cR_{g(t)}^2+\sigma^2(t)\ri)
\end{split}
\eee

Let $\phi:[0,+\infty)\to\mathbb{R}$ be a non-increasing function such that $\phi=1$ on $[0,1]$, vanishes outside $[0,2]$ and satisfies $|\phi'|\leq 10^4, \phi''\geq -10^4\phi$. On $[\a,\b]\subset (0,S]$, we let $\Phi(x)$ be a cutoff function on $M$ given by $\Phi(x)=\phi^m(\frac{\rho(x)}{R})$ for $R>1$ where $\rho$ is uniformly equivalent to $d_{g(a)}(x,p)$ for some $p\in M$ and $|\partial\rho|^2_{g(\a)}+|\nabla^{2,g(\a)}\rho|\leq C_{\a}$ for some $C_{\a}>1$, obtained from \cite{Tam2010}. If $M$ is compact, we simply take $\phi\equiv 1$.

Since $g(t)$ has bounded curvature on $[\a,\b]$, we have $|\Delta_{g(t)}\rho|\leq C_\a$. Hence,
\begin{equation}
\begin{split}
 &\quad \frac{d}{dt}\left( \int_M v \Phi  \;d\mu_{g(t)} \right)\\
&= \int_M \partial_tv\cdot \Phi - v \Phi\cdot \cR_{g(t)} \;d\mu_{g(t)}\\
&= \int_M  \lf(\frac{\p}{\p t}-\Delta_{g(t)}\ri)v\cdot \Phi +v\Delta_{g(t)}\Phi- v \Phi\cdot \cR_{g(t)} \;d\mu_{g(t)}\\
&\leq \int_M v\Phi\lf(\frac2n\frac{-\cR_{g(t)}^2+\sigma^2(t)}{\lf(\cR_{g(t)}-\sigma(t))^2+\theta\ri)^\frac12}-\cR_{g(t)}\ri) \; d\mu_{ g(t)} +\frac{C_{m,\a}}{R}\int_M v\Phi^{1-\frac1m}\; d\mu_{g(t)}.
\end{split}
\end{equation}

Therefore for $0<\a\leq t<s \leq \b\leq S$,
\begin{equation}
\begin{split}
&s^{-a} \left( \int_M v\Phi \; d\mu_{ g(s)} \right)-t^{-a} \left( \int_M v \Phi\; d\mu_{ g(t)} \right)\\
\le &\int_t^s\tau^a\left[\int_M  v(\tau)\Phi\lf(\frac2n\frac{-\cR_{g(\tau)}^2+\sigma^2(\tau)}{\lf(\cR_{g(\tau)}-\sigma(\tau))^2+\theta\ri)^\frac12}-\cR_{g(\tau)}\ri) \; d\mu_{ g(\tau)}\right]\; d\tau \\
&-\int_t^s a\tau^{-a-1}\left( \int_M v(\tau)\Phi \; d\mu_{ g(\tau)} \right)\;d\mu_{g(\tau)}\; d\tau\\
&+\frac{C_{m,\a}}{R^{1-\frac{k}{m}}} \int^s_t \left(\int_M v \Phi d\mu_\tau \right)^{1-\frac1m}  d\tau
\end{split}
\end{equation}

By letting $\theta\to 0$, $v(\tau)\to \varphi(\tau)$ which is positive only at points where $\cR(\tau)<\sigma(\tau)$ where we have
\bee
 \frac{-\cR_{g(\tau)}^2+\sigma^2(\tau)}{\lf(\cR_{g(\tau)}-\sigma(\tau))^2+\theta\ri)^\frac12}\to\cR_{g(\tau)}+\sigma(\tau)\le \cR(g(\tau)
\eee
because $\sigma(\tau)\le 0$. Since $\cR_{g(\tau)}\geq -a\tau^{-1}$ and $1-2/n<1$, by choosing $m=2k$ we have
\be\label{L1-SC}
\begin{split}
 &s^{-a} \left( \int_M \varphi(s)\Phi \; d\mu_{ g(s)} \right) -t^{-a} \left( \int_M  \varphi(t)\Phi \; d\mu_{ g(t)} \right)
\le \frac{C_{k,\a}}{R^{1/2}} \int^\b_t \left(\int_M v \Phi d\mu_\tau \right)^{1-\frac1{2k}}  d\tau
\end{split}
\ee
for all $0<\a\leq t<s \leq \b\leq S$.

By putting $\a=t$ and integrating $s$ over $[t,\b]$, we see that the integral on the right hand side is finite as $R\to +\infty$ since $\varphi(t)\in L^1(M,g(t))$. Result follows from letting $R\to +\infty$ on \eqref{L1-SC}.
\end{proof}

\section{Singular metrics on compact manifolds}\label{Sec:compact}

  In this section, we will prove Theorem~\ref{main-Thm-Ric} by showing that the scalar curvature lower bound is preserved along the Ricci flow if the co-dimension of the singularity is strictly larger than $2$.   When the initial metric has scalar curvature lower bound in distributional sense and higher regularity, the preservation of scalar curvature lower bound has been studied recently in \cite{JiangShengZhang2021}.

\begin{proof}[Proof of Theorem~\ref{main-Thm-Ric}] Let
\be\label{e-sigma}
\sigma(t)=\sigma_0\lf(1-\frac2n\sigma_0t\ri)^{-1}.
\ee
By Proposition \ref{l-regularization}, one can find a smooth metric $h$ and   $T>0$ so that the $h$-flow \eqref{equ:h-flow} has a solution $g(t)$ in $M\times(0,T]$ satisfying the conditions (i)--(iii) in the Lemma \ref{l-regularization} with $\delta =\frac14 a$.

For any $t_0>0$, apply Lemma \ref{l-R-monotone-new} on the corresponding Ricci flow of $g(t)$ using \eqref{e-Phi}  on $[t_0,T]$ and let $t_0\to 0$, we have the following monotone property:
\be\label{e-monotone-1}
\int_M\varphi(s)\;d\mu_{g(s)}\le \lf(\frac st\ri)^{\frac 14a}\int_M\varphi(t)\;d\mu_{g(t)}
\ee
for all $0<t<s<T$, where $\varphi(x,\tau)=\lf(\cR_{g(\tau)}(x)-\sigma(\tau)\ri)_-$.

We want to prove that $\varphi\equiv0$ on $M\times(0,T]$. By \eqref{e-monotone-1}, it is sufficient to prove that
\be\label{e-R-limit}
\lim_{t\to 0^+}t^{-\frac 14a}\int_M\varphi(t)\;d\mu_{g(t)}=0.
\ee

\bigskip

Fix $\ell>a+1$. Let $t_0>0$ and for any $x_0\in M\setminus\Sigma$ with $d_{g_0}(x_0,\Sigma)=r_0$. We can choose $t_i>0$ with $t_i\to 0$ so that $|d_{g(t_i)}(x,y)-d_{g_0}(x,y)|\le \frac 1i$ for all $x, y\in M$ because $g(t)\to g_0$ in $C^0$ norm as $t\to0$, and
$$
|\cR_{g(t_i)}-\cR_{g_0}|\le \frac1{2i}
$$
in $B_{g_0}(x_0,r_0/2)$ because $g(t)\to g_0$  in $C^\infty_{loc}(M\setminus \Sigma)$.
For any  $0<t_i<T$, by considering the corresponding Ricci flow on $M\times[t_i,T]$ using \eqref{e-Phi} with initial metric $g(t_i)$, we have
\be\label{equ-R-RDF}
\lf(\frac{\p}{\p t}-\Delta_{g(t)}\ri)\cR_{g(t)}\ge \frac2n\cR_{g(t)}^2+\la W,\nabla \cR_{g(t)}\ra
\ee
where $W$ is given by \eqref{equ:h-flow}. Hence

\bee
\lf(\frac{\p}{\p t}-\Delta_{g(t)}\ri)( \sigma(t)-\frac1i-\cR_{g(t)})\le  \la W, \nabla ( \sigma(t)-\frac1i-\cR_{g(t)} )\ra.
\eee
Observe that $ \sigma(t_i)-\frac1i-\cR_{g(t_i)}\le 0$ in $B_{g(t_i)}(x_0,\frac14 r_0)$

Since $g(t)$ satisfies Lemma \ref{l-regularization} (ii), (iii), one can apply Proposition \ref{t-MP} to $g(t)$, for $t\in [t_i,T]$ to conclude that there is $C_1>0$, $T>T_2>T_1>0$ independent of $i$ and $x_0$,  so that if  $t_0\le C_1T_1r_0^2\le T_2$, we have
\bee
\cR_{g(t_0)}(x_0)\ge \sigma(t_0)-\frac1i-C_2t_0^\ell r_0^{-2(l+1)}
\eee
for some constant $C_2$ independent of $t_0, r_0, x_0$ provided $t_0\le C_1T_1r_0^2$. We may choose $T_1$ small enough so that $C_1T_1D^2\le T_2$ where $D$ is the diameter of $M$ with respect to $g_0$.

Let $i\to\infty$, we have
\be\label{e-lowerboundR}
\cR_{g(t_0)}(x_0)\ge \sigma(t_0)-C_2t_0^\ell r_0^{-2(l+1)},
\ee
where $r_0=d_{g_0}(x_0,\Sigma)$ provided that $t_0\leq C_1T_1r_0^2$.
\bigskip

Now for $0<t_0<s$, let $\e_0$ be such that $t_0=C_1T_1\e_0^2\le T_2$.  We want to estimate
   \be\label{split-terms-R}
   \begin{split}
   \int_M \varphi (t_0)d\mu_{g(t_0)}=\lf(\int_{\Sigma(b)}+\int_{M\setminus\Sigma(b)}\ri) \varphi (t_0)d\mu_{g (t_0)}=\mathbf{I}+\mathbf{II}.
   \end{split}
   \ee
   Since we want to estimate this for $t_0$ small, we may assume that $\e_0\le \frac b2$.
    By \eqref{e-lowerboundR}, 
   there is $C_2>0$ such that
   $$
   \varphi (x,t)\le C_2t
   $$
   for some constant $C_2$ for all  $(x,t)\in (M\setminus \Sigma(b))\times[0,T]$. 
   Hence
   \be\label{e-II}
   \mathbf{II}\le  C_2t_0 V_{g(t_0)}(M)\le C_3 \e_0^2
   \ee
   for some constant $C_3$ independent of $t_0$. 
%
%
%

To estimate $\mathbf{I}$, let $k$ be positive integer so that
$$
2^k\e_0\le b\le 2^{k+1}\e_0.
$$
Then
\bee
\begin{split}
\mathbf{I}=\sum_{j=1}^{k+1} \mathbf{I}_j
\end{split}
\eee
where
$$
\mathbf{I}_j=\int_{\Sigma(2^j\e_0)\setminus \Sigma(2^{j-1}\e_0)}\varphi(t_0)d\mu_{g(t_0)},
$$
for $j\ge 2$ and

$$
\mathbf{I}_1=\int_{\Sigma(2\e_0)}\varphi(t_0)d\mu_{g(t_0)}.
$$
By the assumption on $\Sigma$,
$$
\mathbf{I}_1\le at_0^{-1} (2\e_0)^{2+a}\le C_4\e_0^a
$$
for some constant $C_4$ independent of $\e_0$.
For $k\ge j\ge2$,   we apply \eqref{e-lowerboundR} to obtain 
\bee
\begin{split}
 \mathbf{I}_j\le&  C_2t_0^\ell (2^{j-1}\e_0)^{-2(l+1)} (2^j\e_0)^{(2+a)}\\
 \le & C_5  2^{(-2l+a)j}\e_0^a\\
 \le &C_5 2^{-lj}\e_0^a
 \end{split}
\eee
for some $C_5$ independent of $\e_0, j$ because $\ell>a+1$.
 Also, as before, we have

\bee
\mathbf{I}_{k+1}\le C_6\e_0^2.
\eee
for some constant $C_0$ independent of $\e_0$. Therefore,
\be\label{esti-I}
\begin{split}
\mathbf{I}\le & C_4\e_0^a+C_6\e_0^2+C_5\e_0^a\sum_{j=2}^k2^{-jl}
\le C_7\e_0^a
\end{split}
\ee
for some $C_7$ independent of $t_0$, provided $0<a<1$ and $\e_0\le 1$.
Assuming $0<a<1$, $\e_0<1$. Combining with \eqref{e-II}, we have
\bee
t_0^{-\frac14a}\int_M \varphi(t_0)d\mu_{g(t_0)}\le C_8t_0^{\frac14a}.
\eee
for some $C_8$ independent of $\e_0$   and hence $t_0$.  From this one can conclude  \eqref{e-R-limit} and hence $\cR_{g(t)}\ge  \sigma(t)$ for all $t\in (0,T]$ by \eqref{e-monotone-1}.

By considering the corresponding Ricci flow for $t>0$, we have
\bee
\begin{split}
\frac{d}{dt}\mathrm{Vol}(M,g(t))&=\int_M -\cR_{g(t)}\;d\mu_{g(t)}\\
&\le -\sigma(t)\mathrm{Vol}(M,g(t))\\
&=-\sigma_0(1-\frac2n\sigma_0t)^{-1}\mathrm{Vol}(M,g(t)).
\end{split}
\eee
for all $t>0$. Since $\mathrm{Vol}(M,g(t))\to \mathrm{Vol}(M,g_0)=1$ as $t\to 0$, we have
$$
\mathrm{Vol}(M,g(t))\le \lf(1-\frac2n\sigma_0t\ri)^\frac n2.
$$

\bigskip

For each $t>0$, consider normalized metric $\wt g(t)=(\mathrm{Vol}(M,g(t)))^{-\frac 2n}g(t)$ so that  $\mathrm{Vol}(M,\tilde g(t))=1$, and
\bee
\begin{split}
\cR_{\wt g(t)}=(\mathrm{Vol}(M,g(t)))^{\frac 2n}\cR_{g(t)}&\ge (\mathrm{Vol}(M,g(t)))^{\frac 2n}\sigma(t)\\
&=
(\mathrm{Vol}(M,g(t)))^{\frac 2n}\lf(1-\frac2n\sigma_0t\ri)^{-1}\sigma_0\ge  \sigma_0
\end{split}
\eee
because $(\mathrm{Vol}(M,g(t)))^{\frac 2n}\lf(1-\frac2n\sigma_0t\ri)^{-1}\le 1$ and $\sigma_0\le0$.

\bigskip
 It is well-known that on a compact manifold $M$ with $\sigma_0=\sigma(M)\leq 0$, any smooth metric $g$ with unit volume and $\cR_g\geq \sigma_0$ must be Einstein with
$\cR_g= \sigma_0$, see \cite[p.126-127]{Schoen1987}. Therefore, $\wt g(t)$ is Einstein with $\cR_{\wt g(t)}=\sigma_0$. By rescaling it back, we have
\begin{equation}\label{estimate-h-flow-limiting}
\left\{
\begin{array}{ll}
\mathrm{Ric}_{g(t)} =\sigma_0 \left(n-2\sigma_0 t\right)^{-1} g(t);\\
\mathrm{Vol}\left(M,g(t)\right) = \left(1-2n^{-1}\sigma_0 t\right)^{n/2}
\end{array}
\right.
\end{equation}
for $t\in (0,T]$.  By \eqref{equ:h-flow}, the normalized flow $\wh g(t)=\displaystyle \frac{n}{n-2\sigma_0 t}g(t),t\in (0,T]$ satisfies
\begin{equation}
\left\{
\begin{array}{ll}
\partial_t \wh g_{ij}= \wh\nabla_i \wh W_j + \wh\nabla_j \wh W_i\\
\wh W_j=\displaystyle  \frac{n}{n-2\sigma_0t} \cdot \wh g_{jk} \wh g^{pq} \left(\wh\Gamma_{pq}^k-\wt \Gamma_{pq}^k \right)
\end{array}
\right.
\end{equation}
where $\wt \Gamma,\wh \Gamma$ are the Christoffel symbols of $h$ and $\wh g(t)$ respectively. By considering the ODE:
\begin{equation}
\left\{
\begin{array}{ll}
\partial_t\Psi_t(x)= \wh W\left( \Psi_t(x),t\right);\\
\Psi_T(x)=x
\end{array}
\right.
\end{equation}
for $(x,t)\in M\times (0,T]$. We obtain a family of diffeomorphisms $\Psi_t$. Moreover, one can check that
 $$
 \frac{\p}{\p t}(\Psi_t^{-1})^* \wh g(t)=0.
 $$
 Hence $(\Psi_t^{-1})^* \wh g(t)=(\Psi_T^{-1})^* \wh g(T)=\wh g(T)$
and  $\wh g(t)=\Psi_t^* \wh g(T)$ for $t\in (0,T]$. On the other hand, $|\wh W|_h\leq Ct^{-1/2}$, we conclude that $\Psi_t\to \Psi_0$ as $t\to 0$ in $C^0$ norm for some continuous map $\Psi_0: M\to M$. Since $\wh g(t)\to g_0$ in $C^0$ as $t\to 0$, we have
\begin{equation}
\begin{split}
d_{g_0}(x,y)&=\lim_{t\to 0} d_{\wh g(t)}(x,y)\\
&=\lim_{t\to 0}
d_{\wh g(T)}\left(\Psi_t(x),\Psi_t(y) \right)=d_{\wh g(T)}\left(\Psi_0(x),\Psi_0(y) \right)
\end{split}
\end{equation}
for all $x,y\in M$. This in particular shows that $\Psi_0$ is a   homeomorphism of $M$.
Since $g_0$ is uniformly equivalent to $\wh g(T)$, we have
$$
C^{-1}d_{\wh g(T)}(x,y)\le d_{\wh g(T)}\left(\Psi_0(x),\Psi_0(y) \right)\le Cd_{\wh g(T)}(x,y)
$$
for some positive constant $C$ and for all $x, y\in M$.
Following \cite[(5.2)]{CalabiHartman1970}, $\Psi_t$ also satisfies
\begin{equation}
\frac{\partial^2 \Psi_t^m}{\partial x^i \partial x^j}=\wh \Gamma_{ij}^k \frac{\partial \Psi_t^m}{\partial x^k}-\bar \Gamma_{kl}^m \frac{\partial \Psi_t^l}{\partial x^i} \frac{\partial \Psi_t^k}{\partial x^j}
\end{equation}
in local coordinate of $M$ where $\Psi_t^m$ are the components of $\Psi_t$, $\bar \Gamma$ is the Christoffel symbol of $\wh g(T)$. Since $\wh g(t)$ is smooth up to $t=0$ outside $\Sigma$, $\Psi_t$ is bounded in $C^\infty_{loc}(M\setminus \Sigma)$ as $t\to 0$. And hence, $\Psi_0$ is smooth and satisfies $\Psi_0^*\wh g(T)=g_0$ outside $\Sigma$. Since $\wh g(T)$ is Einstein with scalar curvature $\sigma_0$ and with unit volume, this completes the proof of (i), (ii) of the theorem.

Suppose $\Sigma$ consists only of isolated singular points. Since $\wh g(t)=\Psi^*_t(\wh g(T))$, the curvature of $\wh g(t)$ are uniformly bounded independent of space and time. Therefore, the curvature of $g_0$ is also uniformly bounded on $M\setminus \Sigma$.  Recall that $g_0$ is Einstein outside $\Sigma$. The last assertion of the theorem follows from the removable singularity result of Smith-Yang \cite{SmithYang1992}.
\end{proof}
\begin{cor}\label{c-points} With the same assumptions on $(M^3,g)$, $\Sigma$ as in Theorem \ref{main-Thm-Ric}  where $\Sigma$ consists of countable many points $\{p_k\}$ with one limit point $p$. Suppose the co-dimension of $\Sigma$ is larger than 2, then $g$ can be extended to be a smooth metric which is Einstein.

\end{cor}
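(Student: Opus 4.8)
The plan is to feed the corollary's hypotheses into Theorem~\ref{main-Thm-Ric} and then strip off the singular set $\Sigma$ one point at a time using the removable-singularity theorem of Smith--Yang.

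First I would apply Theorem~\ref{main-Thm-Ric} to $(M^3,g)$ (with $g$ playing the role of $g_0$), obtaining a homeomorphism $\Psi\colon M\to M$, smooth on $M\setminus\Sigma$, and an Einstein metric $G$ on $M$ with unit volume and scalar curvature $\sigma_0\le 0$ such that $g=\Psi^*G$ on $M\setminus\Sigma$. Since $n=3$ the Weyl tensor vanishes, so $G$ has constant sectional curvature $\sigma_0/6$; consequently $g=\Psi^*G$ is a space form on $M\setminus\Sigma$ with $|\Rm(g)|_g$ bounded, while $g$ is $C^0$ on all of $M$.

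Next I would remove the isolated singular points. Writing $\Sigma=\{p_k\}_{k\ge 1}\cup\{p\}$ with $p_k\to p$, each $p_k$ is an isolated point of $\Sigma$, so one may pick pairwise disjoint geodesic balls $B_k=B_g(p_k,\rho_k)$ avoiding $p$ with $\overline{B_k}\cap\Sigma=\{p_k\}$. On $B_k\setminus\{p_k\}$ the metric $g$ is smooth, Einstein, of bounded curvature, and continuous at $p_k$, so Smith--Yang \cite{SmithYang1992} produces a homeomorphism $\phi_k$ of $B_k$, equal to the identity near $\partial B_k$ and smooth off $p_k$, with $\phi_k^*g$ extending smoothly (and Einstein) across $p_k$. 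Gluing the $\phi_k$ to the identity gives a homeomorphism $\Phi$ of $M$, smooth off $\Sigma$, so that $\Phi^*g$ is smooth and Einstein with bounded curvature on $M\setminus\{p\}$ and still $C^0$ at $p$; a final application of Smith--Yang \cite{SmithYang1992} at the now-isolated point $p$ finishes the extension. Since smooth structures on $3$-manifolds are unique, the (locally supported) coordinate changes can be arranged compatibly with the given smooth structure, so $g$ itself is a smooth Einstein metric on $M$ --- flat if $\sigma_0=0$, hyperbolic up to scale if $\sigma_0<0$.

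The hard part will be the gluing in the previous step: there are infinitely many local coordinate changes accumulating at $p$, and I need their composition to remain a homeomorphism of $M$ that is continuous at $p$. This should follow from a scale-invariant form of the Smith--Yang estimates, which, combined with the $C^0$-continuity of $g$ at $p$, forces $\|\phi_k-\mathrm{id}\|_{C^0}\to 0$ as $\rho_k\to 0$; choosing the radii $\rho_k$ decaying fast enough then does it. An alternative route that sidesteps the bookkeeping is to observe that $(M,d_g)$ is the metric completion of the space form $(M\setminus\Sigma,g)$ (legitimate because $\Sigma$ has co-dimension $>2$, so it neither disconnects nor cuts off distances) and to argue that such a completion, being a topological $3$-manifold, must be a smooth space form, since a co-dimension $>2$ set carries no holonomy obstruction to extending the developing map across it.
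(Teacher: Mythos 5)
Your approach matches the paper's: apply Theorem~\ref{main-Thm-Ric} to reduce to an Einstein metric of bounded curvature off $\Sigma$, remove each isolated $p_k$ via Smith--Yang \cite{SmithYang1992} combined with uniqueness of smooth structures in dimension three, and then remove the remaining point $p$ by a final application of Smith--Yang. The gluing difficulty you flag is in fact automatic --- since the $\phi_k$ have pairwise disjoint supports $B_k$ with $\mathrm{diam}(B_k)\to 0$ and satisfy $\phi_k(B_k)=B_k$, the glued map is already a homeomorphism continuous at $p$ without any scale-invariant Smith--Yang estimates --- and your alternative developing-map/completion argument for space forms, while a clean observation in dimension three, is not the route the paper takes.
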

\begin{proof} By Theorem \ref{main-Thm-Ric}, $g$ is Einstein outside $\Sigma$. By \cite{SmithYang1992}, $g$ can be extended to be smooth near each $p_k$ after possible change of smooth structure. However, in dimension three, the smooth structure is unique. Hence $g$ can be extended smoothly near each $p_k$, and $g$ has only one possible singularity $p$. This is also removable using \cite{SmithYang1992} again.

\end{proof}
\begin{rem}
By (i) of the main Theorem in \cite{CalabiHartman1970}, it is reasonable to expect $\Psi_0$ not to be in $C^1$  in general unless we have stronger degree of continuity on $g_0$.  For example, if $g_0$ satisfies certain Dini continuity condition, then $\Psi_0$ is $C^1$ and in this case we have $g_0=\Psi_0^*\wh g(T)$ on the whole manifold $M$. 
\end{rem}
%

As a corollary of the proof of Theorem~\ref{main-Thm-Ric}, we can show that if $g_0$ is a continuous metric which is smooth away from singularity of high co-dimension, then $g_0$ is of scalar curvature lower bound in the weak sense introduced by Burkhardt-Guim in \cite{Burkhardt2019}.
\begin{cor}\label{Cor:scalar-lower-C0}
Let $M^n$ be a compact manifold and $g_0$ is a continuous metric on $M$ such that $g_0\in C^\infty_{loc}(M\setminus \Sigma)$ for some compact set $\Sigma$ of co-dimension at least $2+a$ for some $a>0$ and $\mathcal{R}(g_0)\geq \sigma_0$ for some $\sigma_0\in\mathbb{R}$ on $M\setminus \Sigma$, then there is a family of smooth metric $g(t),t\in (0,T]$ with $\mathcal{R}(g(t))\geq \sigma_0$ on $M$ such that $g(t)\to g_0$ in $C^0(M)$ as $t\to 0$. In particular, $g_0$ has scalar curvature bounded from below by $\sigma_0$ in the $\b$-weak sense for any $\b<1/2$.
\end{cor}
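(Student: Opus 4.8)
The plan is to extract the desired corollary almost verbatim from the proof of Theorem~\ref{main-Thm-Ric}, keeping only those parts of the argument that do not use the hypothesis $\sigma_0 = \sigma(M)$. First I would invoke Proposition~\ref{l-regularization} to produce a smooth background metric $h$ and a solution $g(t)$, $t\in(0,T]$, of the $h$-flow with $g(t)\to g_0$ in $C^0(M)$ and in $C^\infty_{loc}(M\setminus\Sigma)$ as $t\to 0$, satisfying $\tfrac12 h\le g(t)\le 2h$ and $|\wn g(t)|^2+|\wn^2 g(t)|+|\Rm(g(t))|\le \delta t^{-1}$ for a small $\delta$ to be chosen (say $\delta = \tfrac14 a$, matching the choice in the proof of the main theorem). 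Then I would apply Lemma~\ref{l-R-monotone-new} on the associated Ricci flow on $[t_0,T]$ and let $t_0\to 0$ to get, with $\varphi(x,\tau) = (\cR_{g(\tau)}(x)-\sigma(\tau))_-$ where now $\sigma(\tau)=\sigma_0(1-\tfrac2n\sigma_0\tau)^{-1}$ with $\sigma_0$ the given lower bound (not necessarily the Yamabe invariant), the monotonicity $\int_M\varphi(s)\,d\mu_{g(s)}\le (s/t)^{a/4}\int_M\varphi(t)\,d\mu_{g(t)}$ for $0<t<s<T$. The point is that Lemma~\ref{l-R-monotone-new} only requires $\sigma_0\le 0$ and a volume ratio bound, both of which hold here, so no use of the Yamabe invariant is needed at this stage. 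If $\sigma_0>0$ one can simply replace $\sigma_0$ by $\min\{\sigma_0,0\}$ for the purposes of this monotonicity, since $\cR(g(t))\ge\sigma_0$ is only weakened thereby.

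Next I would run exactly the same local-maximum-principle argument as in the body of Theorem~\ref{main-Thm-Ric}: fix $\ell>a+1$, pick $x_0\in M\setminus\Sigma$ with $r_0=d_{g_0}(x_0,\Sigma)$, choose $t_i\to 0$ along which $g(t_i)\to g_0$ in $C^0$ and in $C^\infty$ near $x_0$, use $(\partial_t-\Delta_{g(t)})\cR_{g(t)}\ge \tfrac2n\cR_{g(t)}^2+\la W,\nabla\cR_{g(t)}\ra$ together with $\cR_{g(t_i)}\ge\sigma(t_i)-\tfrac1i$ on $B_{g(t_i)}(x_0,\tfrac14 r_0)$, and apply Proposition~\ref{t-MP} to $\sigma(t)-\tfrac1i-\cR_{g(t)}$. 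Letting $i\to\infty$ gives the pointwise bound $\cR_{g(t_0)}(x_0)\ge \sigma(t_0)-C t_0^{\ell} r_0^{-2(\ell+1)}$ whenever $t_0\le C_1 T_1 r_0^2$, precisely as in \eqref{e-lowerboundR}. Then the same dyadic decomposition of $\int_M\varphi(t_0)\,d\mu_{g(t_0)}$ into annuli $\Sigma(2^j\e_0)\setminus\Sigma(2^{j-1}\e_0)$, using the co-dimension bound $V(\Sigma(\e))\le C\e^{2+a}$ and $\varphi\le a t_0^{-1}$ on $\Sigma(2\e_0)$, yields $t_0^{-a/4}\int_M\varphi(t_0)\,d\mu_{g(t_0)}\le C t_0^{a/4}\to 0$; combined with the monotonicity this forces $\varphi\equiv 0$, i.e.\ $\cR_{g(t)}\ge\sigma(t)\ge\sigma_0$ on $M$ for all $t\in(0,T]$ (the last inequality since $\sigma(t)\ge\sigma_0$ when $\sigma_0\le 0$, and trivially when $\sigma_0>0$ after the reduction above, where in fact one gets $\cR_{g(t)}\ge\sigma_0$ directly).

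Finally, $g(t)\to g_0$ in $C^0(M)$ is already guaranteed by Proposition~\ref{l-regularization}(i). This is exactly the data needed to conclude that $g_0$ has scalar curvature bounded below by $\sigma_0$ in the $\b$-weak sense of Burkhardt-Guim \cite{Burkhardt2019}: one exhibits a Ricci-DeTurck regularization converging in $C^0$ whose scalar curvatures are bounded below by $\sigma_0$ uniformly in $t$, which is the defining property (for any $\b<1/2$, the relevant admissible range). I expect the main point requiring care — though it is not really an obstacle, just bookkeeping — to be verifying that nowhere in the retained portion of the argument is the equality $\sigma_0=\sigma(M)$ used: it enters the original proof only after $\cR_{g(t)}\ge\sigma(t)$ is established, namely in invoking Schoen's rigidity to conclude $\wt g(t)$ is Einstein, and that step is simply dropped here. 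A secondary point is to state the conclusion against the precise definition of $\b$-weak scalar curvature lower bound in \cite{Burkhardt2019} and to confirm that the uniform lower bound $\cR_{g(t)}\ge\sigma_0$ on all of $M$ (not just away from $\Sigma$) that we have proved implies membership in that class for every $\b<1/2$.
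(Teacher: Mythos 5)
Your overall strategy---re-running the first half of the proof of Theorem~\ref{main-Thm-Ric} to produce the $h$-flow $g(t)$ with $g(t)\to g_0$ in $C^0(M)$, establishing a scalar curvature lower bound on $g(t)$ for $t>0$ via Proposition~\ref{t-MP} and the dyadic decomposition, and then dropping the Yamabe rigidity step---is exactly what the paper does, and the bookkeeping you describe carries over. The one place where your argument has a real gap is the case $\sigma_0>0$ (which the statement allows, since no relation $\sigma_0=\sigma(M)$ is assumed). Replacing $\sigma_0$ by $\min\{\sigma_0,0\}$ in Lemma~\ref{l-R-monotone-new} yields only $\cR(g(t))\ge 0$ when $\sigma_0>0$, which is strictly weaker than the claimed $\cR(g(t))\ge\sigma_0$; and your parenthetical assertion that after this reduction ``one gets $\cR_{g(t)}\ge\sigma_0$ directly'' is not supported by anything in the proposal. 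One cannot simply use $\sigma(t)=\sigma_0(1-\tfrac2n\sigma_0 t)^{-1}$ with $\sigma_0>0$ inside Lemma~\ref{l-R-monotone-new} either, since its proof uses $\sigma(t)\le 0$ explicitly (in the step where, after $\theta\to0$, $\cR+\sigma\le\cR$ on the set $\{\cR<\sigma\}$).

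The paper handles this by a small but genuine change: it invokes the weaker evolution inequality $(\partial_t-\Delta)\cR\ge 0$ (discarding the $\tfrac2n\cR^2$ term), which makes the time-\emph{independent} constant $\sigma_0$ a valid lower barrier irrespective of its sign. Concretely, rerunning the proof of Lemma~\ref{l-R-monotone-new} with $\sigma(t)\equiv\sigma_0$ and $\heat\cR\ge 0$: the term $\tfrac2n\frac{-\cR^2+\sigma^2}{((\cR-\sigma)^2+\theta)^{1/2}}$ disappears, one simply has $\heat v\le 0$, and the remaining volume-derivative term $-v\Phi\cR\le a t^{-1} v\Phi$ (since $\cR\ge -a t^{-1}$) closes the monotonicity $\int_M\varphi(s)\,d\mu_{g(s)}\le(s/t)^a\int_M\varphi(t)\,d\mu_{g(t)}$ for any $\sigma_0\in\mathbb{R}$, with $\varphi=(\cR-\sigma_0)_-$. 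The same simplification applies in the local maximum principle step: one takes $\sigma_0-\tfrac1i-\cR_{g(t)}$ as the supersolution (constant in place of $\sigma(t)$), for which $\heat(\sigma_0-\tfrac1i-\cR_{g(t)})\le\la W,\nabla(\sigma_0-\tfrac1i-\cR_{g(t)})\ra$ follows from $\heat\cR\ge 0$ alone. With this substitution your proof becomes correct for all $\sigma_0\in\mathbb{R}$, and then matches the paper's. As the paper remarks, nothing is lost by discarding the quadratic term, since the corollary only needs the unsharp constant bound $\cR(g(t))\ge\sigma_0$.

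Two minor further notes. First, you do not actually need the pointwise bound $\cR_{g(t)}\ge\sigma(t)$; the uniform constant bound $\cR_{g(t)}\ge\sigma_0$ is both easier and exactly what \cite[Corollary 1.6]{Burkhardt2019} requires (after passing from the $h$-flow to the associated Ricci flow by the time-dependent diffeomorphisms, which preserve scalar curvature and the $C^0$ convergence at $t=0$). Second, the restriction $\b<1/2$ comes from Burkhardt-Guim's definition of $\b$-weak lower bounds, not from anything in the argument here, so it does not require separate verification beyond citing their corollary.
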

\begin{proof}
 By the proof of Theorem~\ref{main-Thm-Ric}, there is a smooth solution  $g(t)$ to the $h$-flow on $M\times (0,T]$ for some $h$ such that $g(t)\to g_0$ in $C^0(M)$ as $t\to 0$ and $$\cR(g(t))\geq \sigma_0$$
for $t\in (0,T]$ where we have used $\heat \cR\geq 0$. Result follows by rescaling and  \cite[Corollary 1.6]{Burkhardt2019}. We note here that since we only need to construct sequence of $g_i\to g_0$ with scalar curvature lower bound, we don't need the sharpest estimate on the lower bound.
\end{proof}

\begin{rem}
When $\sigma_0=0$, the scalar curvature doesn't play a crucial role in the analysis as we are not required to control the volume in this case. Similar results will hold as long as the uniform local estimate \eqref{e-lowerboundR} is true. In particular, this is the case if $\Rm(g_0)(x)$ is inside the curvature cone studied in Ricci flow theory for all $x\notin \Sigma$, see \cite[Theorem 3.1]{LeeTam2020}. For example, if initially $g_0$ is of weakly $\mathbf{PIC}_1$ away from some compact sets of co-dim $\geq 2+a$ for some $a>0$, then the Ricci-Deturck flow will give a smooth metric with weakly $\mathbf{PIC}_1$ globally on $M$.
\end{rem}

\section{Positive mass theorem with singular set}\label{Sec:AF}

In this subsection, we will discuss the analogy of subsection~\ref{subsection-compact-regularize} in the asymptotically flat setting. There are different definitions for asymptotically flat manifold. For our purpose, we consider the following:
\begin{defn}\label{AF-condition}
An n dimensional Riemannian manifold $(M^n, g)$, where $g$ is continuous, is said to be asymptotically flat (AF) if there is a compact subset $K$ such that $g$ is smooth on $M\setminus K$, and $M\setminus K$ has finitely many components $E_k$, $1\leq k\leq l$, each $E_k$ is called an end of $M$, such that each $E_k$ is diffeomorphic to $\mathbb{R}^m\setminus B_{euc}(R)$ for some Euclidean ball $B_{euc}(R)$, and the following are true:
\begin{enumerate}
\item[(i)] In the standard coordinates $x^i$ of $\mathbb{R}^n$, $g_{ij}=\delta_{ij}+\sigma_{ij}$
so that
$$\sup_{E_k}\left\{\sum_{s=0}^2 |x|^{\tau+s}|\partial^s \sigma_{ij}| + \left[|x|^{\a+2+\tau} \partial^2 \sigma_{ij} \right]_\a \right\}<+\infty$$
for some $0<\a\leq 1$, $\tau>\frac{n-2}{2}$, where $\partial^s f$ denotes the derivatives
 with respect to the Euclidean metric, and $[f]_\a$ is the $\a$-H\"older norm of $f$;
\item[(ii)]The scalar curvature $\cR$ satisfies the decay condition:
$$|\cR(x)|\leq C(1+d_g(x,p))^{-q}$$
for some $C>0$ and $n+2\geq q>n$. Here $d_g(x,p)$ is the distance function from $p\in M$.
\end{enumerate}
\end{defn}

The coordinate chart satisfying (i) is said to be admissible. In the following, for a function $f$ defined near infinity of $\mathbb{R}^n$. For $k\geq 0$, we say that $f=O_k(r^{-\tau})$ if $\sum_{i=0}^k r^i |\partial^if|=O(r^{-\tau})$ as $r=|x|\to +\infty$.

\begin{defn}[\cite{ArnowittDeserMisner1961}]
The Arnowitt-Deser-Misner (ADM) mass of an end $E$ of an AF manifold $M$ is defined as
 $$m_{ADM}(E)=\lim_{r\to +\infty}\frac{1}{2(n-1)\omega_{n-1}}\int_{S_r}(g_{ij,i}-g_{ii,j})\nu^j \; dA^0_r$$
 in an admissible coordinate chart where $S_r$  is the Euclidean sphere, $\omega_{n-1}$ is the volume of $n-1$ dimensional unit sphere, $dA^0_r$  is the volume element induced by the Euclidean metric, $\nu$ is the outward unit normal of $S_r$ in $\mathbb{R}^n$ and the derivative is the ordinary partial derivative.
\end{defn}

By the result of Bartnik \cite{Bartnik1986},  $m_{ADM}(E)$ is independent of the choice of admissible charts and hence it is well-defined. we have the following positive mass theorem by Schoen and Yau \cite[Theorem 5.3]{SchoenYau2017}, see also \cite{Bartnik1986,ParkerTaubes1982,Schoen1987,SchoenYau1979,SchoenYau1979-2,Witten1981}.
\begin{thm}\label{SY-PMT}
Assume that $(M,g)$ is an AF manifold with $\cR(g)\geq 0$. For each end $E$, we have $m_{ADM}(E)\geq 0$. Furthermore, if $m_{ADM}(E)=0$ for some end $E$, then $(M,g)$ is isometric to $\mathbb{R}^n$.
\end{thm}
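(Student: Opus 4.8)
This is the classical positive mass theorem, and I would prove it by the minimal hypersurface method of Schoen--Yau, which is the content of the cited reference; nothing from the earlier sections of this paper is needed. \emph{Reduction.} First I would reduce to a convenient normal form near infinity. By a density argument --- perturbing $g$ by a compactly supported symmetric $2$-tensor changes $m_{ADM}(E)$ by an arbitrarily small amount, while the condition $\cR(g)\ge 0$ can be restored by a conformal correction with negligible effect on the mass --- it suffices to treat the case where $\cR(g)\ge 0$ (with $\cR(g)>0$ somewhere) and where $g$ is harmonically flat near infinity, i.e. $g=u^{4/(n-2)}\delta$ on the end $E$ with $u$ harmonic and $u=1+\tfrac{m}{2}|x|^{2-n}+O(|x|^{1-n})$, $m$ a constant multiple of $m_{ADM}(E)$; in particular $g$ is scalar-flat and of Schwarzschild type near infinity. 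I then argue by contradiction, assuming $m_{ADM}(E)<0$.

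\emph{A trapped minimal hypersurface and descent in dimension.} When $m_{ADM}(E)<0$, the negative mass parameter makes the large coordinate cylinders over hyperplanes mean-convex with the favourable sign, so minimizing area in large coordinate slabs and passing to a limit produces a complete, two-sided, area-minimizing hypersurface $\Sigma^{n-1}$ that is asymptotic to a hyperplane and stays in a bounded neighbourhood of the compact part; in particular $\Sigma$ is stable. Feeding the second variation inequality $\int_\Sigma|\nabla\varphi|^2\ge\int_\Sigma(|A|^2+\Ric_g(\nu,\nu))\varphi^2$ into the Gauss equation $2\Ric_g(\nu,\nu)+|A|^2=\cR_g-\cR_\Sigma$ (valid since $H=0$) and conformally deforming the induced metric on $\Sigma$ by the ground state of the stability operator, one obtains on $\Sigma$ an asymptotically flat metric of dimension $n-1$ with $\cR\ge 0$ whose ADM mass is $\le 0$, and strictly negative unless $g$ is flat along $\Sigma$. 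This sets up an induction on $n$: the positive mass theorem and its rigidity in dimension $n-1$ force the deformed $\Sigma$ to be flat $\mathbb{R}^{n-1}$, which we trace back to reach a contradiction.

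\emph{Base case and general dimension.} For $n=3$, $\Sigma^2$ is a complete stable minimal surface asymptotic to a plane; using logarithmic cutoffs in the stability inequality together with the Gauss equation gives $\int_\Sigma K_\Sigma\ge\tfrac12\int_\Sigma\cR_g\ge 0$, while the Cohn--Vossen inequality forces $\int_\Sigma K_\Sigma\le 0$, with equality only for the flat plane; hence $A\equiv 0$, $\Sigma$ is totally geodesic and $g$ is Ricci-flat along $\Sigma$, and one checks that this is incompatible with the existence of such a complete stable hypersurface in a metric that is negative-mass Schwarzschild (hence genuinely curved) near infinity --- a contradiction. For $n\ge 8$ the minimizer must be taken as an area-minimizing integral current, smooth off a singular set of codimension $\ge 7$, and the essential additional work --- the technical core of Schoen--Yau's proof --- is to show, via capacity and removable-singularity estimates, that the conformal change and the PDE constructions on $\Sigma$ are insensitive to this singular set, so that the descent still terminates at $n=3$. (When $M$ is spin one may instead run Witten's argument: solving $D\psi=0$ with $\psi\to$ const at infinity and integrating the Lichnerowicz formula $D^2=\nabla^*\nabla+\tfrac14\cR$ yields $c_n\,m_{ADM}(E)=\int_M\big(|\nabla\psi|^2+\tfrac14\cR|\psi|^2\big)\ge 0$ directly.)

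\emph{Rigidity and the main obstacle.} If $m_{ADM}(E)=0$, I would first show $\Ric(g)\equiv 0$: otherwise a perturbation of $g$ supported near a point where $\Ric(g)\neq 0$ can be chosen to strictly decrease the mass while keeping $\cR\ge 0$, contradicting the inequality just proved. Given $\Ric(g)\equiv 0$, the borderline case of the minimization produces through every point of $M$ a complete area-minimizing hypersurface which, by stability together with $\cR=0$ and $\Ric=0$, is totally geodesic and flat; these sweep out a foliation that forces the ambient metric to split as a Riemannian product, and iterating (or, for $n=3$, using that Ricci-flat implies flat) shows $(M,g)$ is isometric to flat $\mathbb{R}^n$. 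The main obstacle is the general-dimension step: establishing that the minimizing hypersurface exists and cannot escape to infinity --- which is exactly where the sign of the mass enters, as a barrier --- and, for $n\ge 8$, carrying the inductive descent through the low-dimensional singular set of the minimizer, since the regularity theory only bounds the size of that set and one must show the auxiliary analysis on $\Sigma$ is unaffected by it; upgrading Ricci-flat to flat in the higher-dimensional rigidity statement is a second delicate point.
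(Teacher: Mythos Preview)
The paper does not give its own proof of this statement: it is simply quoted from the literature, with the attribution ``by Schoen and Yau \cite[Theorem 5.3]{SchoenYau2017}, see also \cite{Bartnik1986,ParkerTaubes1982,Schoen1987,SchoenYau1979,SchoenYau1979-2,Witten1981}.'' It functions as a black box that is applied in the proof of Theorem~\ref{main-Thm-PMT}, and you are right that nothing from the earlier sections of the paper is used (or could be used) to prove it.

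Your sketch is a fair high-level outline of the Schoen--Yau minimal hypersurface strategy as carried out in the cited references, including the harmonic-asymptotics reduction, the barrier coming from the negative mass, the stability/Gauss--equation rearrangement, the conformal descent in dimension, and the handling of the singular set in dimensions $n\ge 8$ from \cite{SchoenYau2017}. Since the paper itself offers no argument beyond the citation, there is nothing to compare your approach against; your proposal is essentially a summary of what the cited references contain, and as such is appropriate. If anything, one should be aware that several of the steps you compress into a sentence --- the existence of the minimizer that does not drift to infinity, the precise statement that the conformally deformed $\Sigma$ is again AF with nonpositive mass, and the rigidity upgrade from Ricci-flat to flat in higher dimensions --- each require substantial work in the original papers, but you flag these yourself as the main obstacles.
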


 We want to prove the positive mass theorem for metrics which are
smooth outside a compact set of codimension at least $2+a$ for some $a>0$.  We begin with the regularization using $h$-flow.
\begin{prop}\label{AF-hflow}
Under the assumption of Proposition~\ref{l-regularization}, if in addition $g_0$ is AF, then there is a smooth metric $h$ on $M$ satisfying \eqref{background-h-regular} such that the $h$-flow has a solution $g(t)$ on $M\times (0,T]$ which satisfies the conclusions in Proposition~\ref{l-regularization} and is AF. Moreover, for any end $E$, the mass of $g(t)$ satisfies
\begin{equation}\label{ADM-flow-ineq}
m_{ADM,g(t)}(E)\leq m_{ADM,g_0}(E).
\end{equation}
\end{prop}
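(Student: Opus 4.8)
The plan is to construct the background metric $h$ and the $h$-flow $g(t)$ exactly as in the proof of Proposition~\ref{l-regularization}, but carrying out the smoothing in Lemma~\ref{smooth-approximation} so that the approximating metrics $g_{i,0}$ agree with $g_0$ near each end (since $\Sigma$ is compact, we may take $\Sigma(i^{-1})$ to stay inside a fixed compact set $K$). This guarantees that $h$ itself is AF with the same asymptotic expansion as $g_0$, and that \eqref{background-h-regular} holds after Shi's perturbation, which can be taken to be supported in a compact set so as not to disturb the AF structure. The solution $g(t)$ produced by Theorem~\ref{Simon-Theorem} together with Proposition~\ref{hFlow-improve} and Proposition~\ref{prop-local-spacetime} then satisfies the conclusions (i)--(iii) of Proposition~\ref{l-regularization} verbatim.

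Next I would establish that $g(t)$ remains AF for each $t\in(0,T]$ with uniform control on the asymptotic expansion. The key point is that away from a large Euclidean ball the initial metric is smooth and its derivatives decay, so by Proposition~\ref{prop-local-spacetime} (applied on annular regions, using the scaling of the flow) one gets, for each $t$, estimates of the form $g_{ij}(t)=\delta_{ij}+O_2(|x|^{-\tau})$ on each end, together with the appropriate H\"older bound on the second derivatives, uniformly on $[0,T]$. One also needs that the flow preserves the scalar curvature decay $|\cR(g(t))|\le C(1+d_{g(t)}(x,p))^{-q}$; this follows from the evolution equation $\heat\cR = 2|\Ric|^2$ together with the parabolic smoothing estimates on the ends and the fact that $\cR(g_0)$ already has the required decay. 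With these estimates in hand, $m_{ADM,g(t)}(E)$ is well-defined by Bartnik's theorem for every $t\in(0,T]$.

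Then the mass monotonicity \eqref{ADM-flow-ineq} is proved by differentiating the flux integral in time and using Stokes' theorem. Along the $h$-flow one has $\p_t g_{ij} = -2R_{ij}+\nabla_iW_j+\nabla_jW_i$, and since $W$ decays fast on the ends (because $g(t)$ and $h$ have the same leading expansion, so $\Gamma-\wt\Gamma$ decays), the diffeomorphism terms contribute nothing to the boundary integral at infinity; equivalently one may pass to the associated Ricci flow $\wh g(t)=\Phi_t^*g(t)$, under which the ADM mass is unchanged because $\Phi_t$ is asymptotic to the identity. For the Ricci flow, the standard computation gives
\be
\frac{d}{dt}m_{ADM,\wh g(t)}(E)=-\frac{1}{2(n-1)\omega_{n-1}}\lim_{r\to\infty}\int_{S_r}\left(\p_j R_{ii}-\p_i R_{ij}\right)\nu^j\,dA^0_r,
\ee
and integrating the contracted second Bianchi identity $\p_iR_{ij}=\tfrac12\p_j\cR$ shows this boundary term equals a multiple of $\lim_{r\to\infty}\int_{S_r}\p_j\cR\,\nu^j\,dA^0_r$, which in turn (by the divergence theorem and $\heat\cR=2|\Ric|^2\ge0$) controls $\tfrac{d}{dt}$ of the mass by $-\int_{E}\Delta\cR\le$ a quantity of a sign forcing the mass to be non-increasing; more precisely one obtains $\tfrac{d}{dt}m_{ADM}(E)=-c_n\int_M|\Ric|^2\le 0$ after handling the boundary terms, using the decay of $\cR$ and $\nabla\cR$ at infinity established above. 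This yields \eqref{ADM-flow-ineq} upon integration.

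The main obstacle I anticipate is not the formal differentiation but the justification of the limit interchanges and the claim that the mass is continuous (indeed finite and well-behaved) down to $t=0^+$: one must show the asymptotic expansion of $g(t)$ holds with constants uniform as $t\to0$, so that the boundary integrals defining the mass converge uniformly and $m_{ADM,g(t)}(E)\to m_{ADM,g_0}(E)$, or at least $\limsup_{t\to0}m_{ADM,g(t)}(E)\le m_{ADM,g_0}(E)$, which is what is actually needed downstream. This requires the local spacetime regularity of Proposition~\ref{prop-local-spacetime} to be upgraded to weighted (decaying) estimates on the ends, which is routine given the decay of $g_0$ there but needs to be stated carefully; the interior behavior near $\Sigma$ plays no role here since $\Sigma$ is compact and far from infinity.
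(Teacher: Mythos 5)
Your construction of $h$ and the $h$-flow $g(t)$ matches the paper's setup (the paper in fact chooses $h=\phi g_{i_0,0}+(1-\phi)g_{euc}$, so that $h$ is exactly Euclidean near infinity, which is cleaner than ``$h$ has the same asymptotic expansion as $g_0$''), and the claim that $g(t)$ stays AF by local spacetime regularity on the ends is fine in spirit. The problem is the mass inequality.

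Your proposed formula $\frac{d}{dt}m_{ADM,\wh g(t)}(E)=-c_n\int_M|\Ric|^2\le 0$ is incorrect: along a smooth Ricci flow with sufficiently fast decay, the ADM mass is \emph{constant}, not strictly decreasing. Indeed $\p_t(g_{ij,i}-g_{ii,j})=-2(R_{ij,i}-R_{ii,j})=\p_j\cR$ by the contracted Bianchi identity, and the resulting boundary integral $\lim_{r\to\infty}\int_{S_r}\p_j\cR\,\nu^j\,dA^0_r$ vanishes under the decay hypotheses. This is precisely the Dai--Ma / McFeron--Sz\'ekelyhidi mass preservation theorem, and it is what the paper cites (\cite[Corollary 12]{McFeronSzekelyhidi2012}). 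Trying to convert the flux integral into $-\int_M|\Ric|^2$ via the divergence theorem does not close up, because $\Delta\cR=\p_t\cR-2|\Ric|^2$ and the $\int_{B_r}\p_t\cR$ term is not of a fixed sign nor obviously negligible.

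More importantly, you correctly identify but do not resolve the real difficulty: the comparison must reach $t=0^+$, where $g_0$ is singular and the flow's curvature blows up near $\Sigma$, so one cannot simply invoke mass preservation from $t=0$. The paper's route sidesteps this entirely. It applies mass preservation to the \emph{smooth} approximations $g_{i,0}$ (which agree with $g_0$ on the ends, so have the same mass), giving $m_{ADM,g_0}(E)=\liminf_i m_{ADM,g_i(t)}(E)$ for every fixed $t>0$. Then it uses the McFeron--Sz\'ekelyhidi comparison inequality (\cite[(18)]{McFeronSzekelyhidi2012})
\[
m_{ADM,g(t)}(E)\le \liminf_{i} m_{ADM,g_i(t)}(E)+O(r^{-\lambda})+\limsup_i\int_{E\setminus B_r}\cR_-(g_i(t))\,d\mu_{g_i(t)},
\]
and controls the $\int\cR_-$ term on the ends via the local maximum principle bound \eqref{e-lowerboundR}, which gives $\cR_-(g_i(t))\le C_0 t^l r^{-2(l+1)}$ for any $l>n$. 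Sending $r\to\infty$ yields \eqref{ADM-flow-ineq}. The $L^1$ control of $\cR_-$ coming from the local maximum principle is the key technical ingredient your argument lacks, and it replaces the uniform-in-$t$ weighted asymptotic estimates you flagged as the remaining obstacle.
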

\begin{proof}
As in the proof of Proposition~\ref{l-regularization}, we let $\sigma(n,\delta)$ be the constant obtained from Proposition~\ref{hFlow-improve}. By Lemma~\ref{smooth-approximation},  there is a sequence of smooth metrics $g_{i,0}$ such that
\begin{enumerate}
\item[(i)] $g_{i,0}\to g_0$ in $C^0$ uniformly;
\item[(ii)] $g_{i,0}=g_0$ outside $\Sigma(i^{-1})$.
\end{enumerate}
In particular,  since $g_{i,0}$ coincides with $g_0$ outside a compact set, $g_{i,0}$ is uniformly AF for all $i\in \mathbb{N}$. Namely, $g_{i,0}$ satisfies the same estimates in Definition~\ref{AF-condition}.

Since $g_{i,0}$ is uniformly AF,  we may choose $h=\phi g_{i_0,0}+(1-\phi) g_{euc}$ where $\phi$ is a cutoff function on large compact set so that for all $i>i_0$, $g_{i,0}$ is $1+\sigma$ close to $h$ and hence there is a solution $g_i(t)$ to the $h$-flow on $M\times (0,T]$ for some $T$ independent of $i$. We may assume $T<1$. The estimates follows from Theorem \ref{Simon-Theorem}, Proposition \ref{hFlow-improve}, and \ref{prop-local-spacetime}.

To show that $g(t)$ is AF,  it suffices to show that $g_i(t)$ satisfies the condition in Definition~\ref{AF-condition} uniformly in $i>i_0$.  Since $g_{i,0}=g_0$  and $h=g_{euc}$ outside a large compact set,   the proof of \cite[Lemma 7.6]{ShiTam2016} can be carried over, see also \cite{McFeronSzekelyhidi2012,DaiMa2007}. By letting $i\to +\infty$, we obtain AF of $g(t)=\lim_{i\to +\infty} g_i(t)$ for $t>0$.

It remains to establish the inequality relating the mass of $g(t)$ and $g_0$.  Recall that $g_{i,0}$ coincides with $g_0$ outside compact set and the mass is preserved under the smooth Ricci flow (and hence the $h$-flow) by \cite[Corollary 12]{McFeronSzekelyhidi2012},  we have for all $t\in (0,T]$ and $i>i_0$,
\begin{equation}\label{ADM-inequality-1}
\begin{split}
m_{ADM,g_0}(E)&=\liminf_{i\to +\infty} m_{ADM,g_{i,0}}(E)\\
&=\liminf_{i\to +\infty}  m_{ADM,g_{i}(t)}(E).
\end{split}
\end{equation}

Using \cite[(18)]{McFeronSzekelyhidi2012}, AF of $g(t)$ and the fact that $g_i(t)\to g(t)$ in $C^\infty_{loc}(M\times (0,T))$ as $i\to+\infty$,  we have
\begin{equation}\label{ADM-inequality}
\begin{split}
m_{ADM,g(t)}(E)&\leq \liminf_{i\to +\infty}m_{ADM,g_i(t)}(E)+ O(r^{-\lambda}) \\
&\quad +\limsup_{i\to +\infty} \int_{E\setminus B_r} \cR_-(g_i(t)) \; d\mu_{g_i(t)}
\end{split}
\end{equation}
for some $\lambda>0$ where $B_r$ denotes the Euclidean ball of radius $r$ on the end $E$.  Since $g_i(t)$ is uniformly equivalent to  the Euclidean metric outside compact set,  as in \eqref{e-lowerboundR},  for any $l>1+\delta$ there is $C_0>0$ such that for $r\to +\infty$ and $i>i_0$,
\begin{equation}\label{R-decay-flow}
 \cR_-(g_i(t)) \leq C_0 t^l r^{-2(l+1)}
\end{equation}
on $\partial B_r\times (0,T]$. Here we have used the fact that $g_{i,0}=g_0$ outside compact set.  By choosing $l>n$ and using \eqref{R-decay-flow}, we conclude that for all $i>i_0$,
\begin{equation}\label{L1-R}
\begin{split}
 \int_{E\setminus B_r} \cR_-(g_i(t)) \; d\mu_{g_i(t)}&\leq C_1 \int_{E\setminus B_r} \cR_-(g_i(t)) \; d\mu_{g_{euc}}\\
 &\leq C_1t^n\int^\infty_r  s^{-2(l+1)+n-1}  ds\\
 &\leq C_2t^n  r^{-n-2}
\end{split}
\end{equation}
for some $C_2>0$ independent of $i\to+\infty$.\bigskip

Combines this with \eqref{ADM-inequality}, we conclude that
$$m_{ADM,g(t)}(E)\leq \liminf_{i\to +\infty}m_{ADM,g_i(t)}(E)+ O(r^{-\lambda'})$$
for some $\lambda'>0$. This completes the proof by \eqref{ADM-inequality-1} and letting $r\to+\infty$.

\end{proof}

Now we are ready to prove the positive mass theorem with singular metrics.
\begin{proof}[Proof of Theorem~\ref{main-Thm-PMT}]
By Proposition~\ref{AF-hflow}, it suffices to show that $\cR(g(t))\geq 0$. Suppose $\cR(g(t))\geq 0$, Theorem~\ref{SY-PMT} implies $m_{ADM,g(t)}(E)\geq 0$ for $t\in (0,T]$ and hence $m_{ADM,g_0}(E)\geq 0$ by \eqref{ADM-flow-ineq}. Moreover if $m_{ADM,g_0}(E)=0$ for some end $E$, we have $m_{ADM,g(t)}(E)= 0$ and hence $(M,g(t))$ is isometric to the Euclidean space. Moreover, since $g(t)\to g_0$ in $C^\infty_{loc}(M\setminus \Sigma)$ as $t\to0$, $g_0$ is flat outside $\Sigma$. The isometry follows from the fact that $g(t)\to g_0$ in $C^0_{loc}$ as $t\to 0$, see also \cite{JiangShengZhang2020} for the method using RCD theory.

To show that $\cR(g(t))\geq 0$,  we will modify the proof of Theorem~\ref{main-Thm-Ric}. Since $g(t)$ is uniformly equivalent to $h$ and $h=g_{euc}$ outside compact set, letting $i\to+\infty$ in \eqref{L1-R} implies that $\cR_-(g(t))\in L^1(M)$ for $t\in (0,T]$. By Lemma~\ref{l-R-monotone-new}, it suffices to show that
\be\label{e-R-limit-noncompact}
\lim_{t\to 0^+}t^{-\frac 14a}\int_M\cR_-(g(t))\;d\mu_{g(t)}=0.
\ee
\bigskip

We split it into three parts as in \eqref{split-terms-R}:
\begin{equation}
\begin{split}
 &\quad   \int_M \cR_-(g(t_0))\; d\mu_{g(t_0)}\\
 &=\lf(\int_{\Sigma(b)}+\int_{B_{h}(x_0,2R)\setminus\Sigma(b)}+\int_{M\setminus B_{h}(x_0,2R)}\ri) \cR_-(g(t_0))\;d\mu_{g (t_0)}\\
   &=\mathbf{I}+\mathbf{II}+\mathbf{III}
   \end{split}
\end{equation}

Since $h=g_{euc}$ on each end $E_k$, we may choose $R$ sufficiently large such that $$M\setminus B_{h}(x_0,2R)\subset \sqcup_{k=1}^N \lf(E_k\setminus B_{R}\ri)$$
where $B_R$ is the Euclidean ball of radius $R$ and $\{E_k\}_{k=1}^N$ are the ends of $M$.

Using Fatou's lemma and the fact that $g_i(t)\to g(t)$ in $C^\infty_{loc}$, we may pass $i\to\infty$ in \eqref{L1-R} to conclude that
\begin{equation}
\mathbf{III} \leq C_0 t_0^n.
\end{equation}

The estimates of $\mathbf{I}$ and $\mathbf{II}$ follows from the same argument of \eqref{esti-I} and \eqref{e-II} respectively. Namely, we have
\begin{equation}
\mathbf{I}+\mathbf{II}\leq C_1 t_0^{a/2}.
\end{equation}

To conclude, we show that
\begin{equation}
  \int_M \cR_-(g(t_0))\; d\mu_{g(t_0)}=\mathbf{I}+\mathbf{II}+\mathbf{III}\leq C_2 t_0^{a/2}.
\end{equation}
for some $C_2>0$ independent of $t_0\in (0,T]$. This proves \eqref{e-R-limit-noncompact} and hence completes the proof.
\end{proof}

\end{document}